\let\svthefootnote\thefootnote
\newcommand\freefootnote[1]{%
  \let\thefootnote\relax%
  \footnotetext{#1}%
  \let\thefootnote\svthefootnote%
}
\newtheorem{theorem}{Theorem}[section]
\newtheorem{proposition}[theorem]{Proposition}
\newtheorem{corollary}[theorem]{Corollary}
\newtheorem{lemma}[theorem]{Lemma}
\newtheorem{remark}[theorem]{Remark}
\newcommand{\Z}{\mathbb{Z}}
\newcommand{\R}{\mathbb{R}}
\newcommand{\C}{\mathbb{C}}
\newcommand{\HH}{\mathbb{H}}
\newcommand{\SL}{\operatorname{SL}}
\newcommand{\cc}{\mathbf{c}}
\newcommand{\half}{\tfrac{1}{2}}
\DeclareMathOperator{\ch}{ch}
\DeclareMathOperator{\id}{\mathrm{id}}
\DeclareMathOperator{\tr}{tr}
\DeclareMathOperator{\str}{str}
\DeclareMathOperator{\sgn}{sgn}
\DeclareMathOperator{\wt}{wt}
\DeclareMathOperator{\vac}{\mathbf{1}}
\DeclareMathOperator{\slt}{\mathfrak{sl}_2}
\newcommand{\Vbar}[1]{V_{\bar{#1}}}
\newcommand{ \im}{\textup{i}}
\newcommand{\VLp}{V_{L}}
\newcommand{\thab}[2]{\vartheta 
\begin{bmatrix}
#1\\#2
\end{bmatrix}
}
\newcommand{\textthab}[2]{\vartheta \left[
\begin{smallmatrix}
#1\\#2
\end{smallmatrix}
\right]}
\begin{document}

\title{Decompositions of index one Jacobi forms into $N=4$ characters and formulas for mock modular forms}
\author{Matthew Krauel\thanks{California State University, Sacramento; e-mail: krauel@csus.edu.
},\quad
 \hfill  Geoffrey Mason\thanks{University of California, Santa Cruz; e-mail: gem@ucsc.edu},\quad
 Michael Tuite\thanks{National University of Ireland, Galway; e-mail: michael.tuite@nuigalway.ie },\quad
 Gaywalee Yamskulna\thanks{Illinois State University; e-mail: gyamsku@ilstu.edu} }
 
 \freefootnote{This project was funded in part by the SQuaREs Program of the American Institute of Mathematics. The first author was also supported by a California State University, Sacramento, RCA Award, and the second author was also supported by the Simons Foundation $\#427007$.}

\date{}
\maketitle

{\abstract
\noindent
It is shown that every weak Jacobi form of weight zero and index one on a congruence subgroup of the full Jacobi group can be decomposed into $N=4$ superconformal characters. Additionally, a simple expression for the mock modular form determining the superconformal character coefficients is obtained, as well as a universal completion structure. Along the way, a useful vector-valued mock modular form is also found and studied.
These results are applied to analyze some Jacobi trace functions associated to super vertex operator algebras and a distinguished sector.
}



\section{Introduction\label{intro}}

The decomposition of weak Jacobi forms on the full Jacobi group into characters of irreducible modules of an $N=4$ superconformal algebra has led to many interesting results and conjectures. Eguchi, Ooguri, and Tachikawa carried this analysis out in sparking the Matheiu moonshine discussions \cite{EOT-Mathieu}.\ Cheng, Duncan, and Harvey \cite{CDH-Umbral, CDH-UmbralNiemeier} also employed this idea and they looked at higher index weak Jacobi forms.\  In both cases, restrictions of the weak Jacobi forms to those related to Niemeier lattices were made. One goal of the present paper is to examine decompositions of certain weak Jacobi forms on subgroups of the Jacobi group.

 Suppose $\phi$ is a weight zero, index one, weak Jacobi form on a subgroup $\Gamma_{\phi}^{J}$ of the full Jacobi group  which decomposes into characters of irreducible unitary $N=4$ Ramond sector modules. That is, we have
 \begin{equation}
 \label{eqn:N=4Decomp}
  \phi  = A_0 \ch_{\frac{1}{4},0} + \sum_{n=0}^\infty B_n \ch_{\frac{1}{4}+n,\half},
 \end{equation}
 where $A_0$ and $B_n$ are constants (the definitions of the  characters $\operatorname{ch}$ and other background material are given in Sections \ref{Section:Background} and \ref{Section:ChiExp} below). One can then consider the coefficient function
 \begin{equation}
\label{eqn:Fdefn}
 F^\phi (\tau) := \sum_{n=0}^\infty B_n q^{n-\frac{1}{8}},
\end{equation}
where $q:=e^{2\pi \im \tau}$ for $\tau \in \mathbb{H}:=\{x+ \im y \in \mathbb{C} \mid y>0\}$. In the event $\Gamma_{\phi}^{J}$ is the full Jacobi group, $\phi$ must be a multiple of the well-known Jacobi form $\phi_{0,1}$ of weight zero and index one. Taking 
$\phi$ to be $2\phi_{0,1}$, the elliptic genus of a $K3$ surface, one obtains
\begin{equation}
\notag 
 F^{K3}(\tau) := F^{2\phi_{0,1}}(\tau) = q^{-\frac{1}{8}}\left(-2+90 q+462 {q}^{2}+1540 {q}^{3}+4554 {q}^{4}+11592 {q}^{5}+\ldots \right).
\end{equation}
Realizing the positive integers in terms of Mathieu group characters goes to the heart of Mathieu moonshine.\ By allowing 
$\phi$ to have higher index and imposing some `extremal' conditions on the Jacobi forms, though still requiring connections between $\Gamma_{\phi}^{J}$ and the Niemeier lattices, Cheng, Harvey and Duncan (loc.\ cit.) also studied functions $F^\phi$  which were central to their Umbral moonshine. In this paper, we develop a simple way to compute and analyze $F^\phi$ for weight zero, index one, $\phi$ and more general $\Gamma_{\phi}^{J}$.

In short, if $\phi $ satisfies \eqref{eqn:N=4Decomp} then we can isolate two intrinsic functions $S_0$ and $S_1$ in the superconformal character decomposition and pair these with the well-known component functions $f_{0} $ and $f_{1} $ of a vector-valued modular form obtained in the theta decomposition \cite[Chapter II, \S 5]{EZ-JacobiForms} of $\phi$. Furthermore, we may reverse this argument to obtain the supercharacter decomposition for any weight zero, index one, weak Jacobi form $\phi$.
 We then obtain our main result.

\begin{theorem}
\label{thm:MainTheoremF}
Let $\phi (z;\tau)$ be a weight zero, index one, weak Jacobi form on some congruence subgroup $\Gamma_{\phi}^{J}:= \Gamma_{\phi}\ltimes \Z^2 $ of the Jacobi group. 
	\begin{enumerate}
		\item[(i)] $\phi$ can be expanded in terms of index one, $N=4$ Ramond superconformal characters, with coefficient function $F^\phi = f_{0} S_1 -f_{1} S_0$. 
		\item[(ii)]  $F^\phi$  is a mock modular form of weight one-half
	(with a multiplier system) for $\Gamma_{\phi}$ with a universal completion
\[
\widehat{F}^\phi (\tau)=F^\phi (\tau)+\chi C(\tau),
\]
where $\chi=\phi(0; \tau)$ is a constant and $C$ is defined in \eqref{eqn:completionC} below.
	\end{enumerate}
\end{theorem}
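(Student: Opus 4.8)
The plan is to run both the theta decomposition and the $N=4$ decomposition of $\phi$ through the same weight-$\tfrac12$ theta basis and reduce matters to $2\times 2$ linear algebra. First I would invoke the index-one theta decomposition \cite[Chapter II, \S 5]{EZ-JacobiForms}, writing $\phi=f_{0}\theta_{0}+f_{1}\theta_{1}$, where $\theta_{0},\theta_{1}$ are the standard index-one theta functions and $(f_{0},f_{1})$ transforms as a vector-valued modular form of weight $-\tfrac12$ for $\Gamma_{\phi}$ with a multiplier system. Next I would record the explicit $c=6$ Ramond $N=4$ characters and use the structural fact that every massive character $\ch_{\frac14+n,\half}$ equals $q^{\,n-\frac18}$ times the single, $n$-independent block $\vartheta_{1}(z;\tau)^{2}/\eta(\tau)^{3}$ (with $\vartheta_{1}$ the odd Jacobi theta function); summing against the $B_{n}$ then collapses the massive part of \eqref{eqn:N=4Decomp} to $F^{\phi}\cdot\vartheta_{1}^{2}/\eta^{3}$. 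Decomposing the two remaining universal objects, $\ch_{\frac14,0}$ and $\vartheta_{1}^{2}/\eta^{3}$, in the $\theta_{0},\theta_{1}$ basis then turns \eqref{eqn:N=4Decomp} into a linear system expressing $(f_{0},f_{1})$ in terms of $(A_{0},F^{\phi})$.

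For part (i) I would solve this system by Cramer's rule. Inverting the $2\times2$ coefficient matrix produces $A_{0}$ together with the cross product $F^{\phi}=f_{0}S_{1}-f_{1}S_{0}$, where $S_{0},S_{1}$ are the theta-components of the massless character $\ch_{\frac14,0}$, normalized by the universal Wronskian of the two blocks; in particular $S_{0},S_{1}$ depend only on the $N=4$ data and not on $\phi$. Reading the same computation in reverse establishes existence of the decomposition for an arbitrary weight-zero, index-one weak Jacobi form $\phi$: one defines $A_{0}$ and $F^{\phi}$ by these formulas and checks that the two theta-components of $A_{0}\ch_{\frac14,0}+F^{\phi}\,\vartheta_{1}^{2}/\eta^{3}$ agree with $f_{0}$ and $f_{1}$, so that \eqref{eqn:N=4Decomp} holds. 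This part is essentially bookkeeping once the characters are written out explicitly.

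For part (ii) the mock modularity and the completion would be imported from the massless $N=4$ character, the classical source of the anomaly: its theta-components are mock modular of the appropriate weight with a known unary-theta shadow, and their modular completions $\widehat{S}_{0},\widehat{S}_{1}$ differ from $S_{0},S_{1}$ by non-holomorphic Eichler integrals of that shadow. Because $(f_{0},f_{1})$ is genuinely modular, $\widehat{F}^{\phi}:=f_{0}\widehat{S}_{1}-f_{1}\widehat{S}_{0}$ then transforms with weight $\tfrac12$ for $\Gamma_{\phi}$, and subtracting yields a non-holomorphic correction $\widehat{F}^{\phi}-F^{\phi}=f_{0}(\widehat{S}_{1}-S_{1})-f_{1}(\widehat{S}_{0}-S_{0})$.

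The main obstacle I anticipate is precisely the final identification of this correction as $\chi\,C(\tau)$, with $C$ genuinely universal and $\chi=\phi(0;\tau)$ constant, as in \eqref{eqn:completionC}. To pin down the scalar I would evaluate \eqref{eqn:N=4Decomp} at $z=0$: the massive block vanishes there since $\vartheta_{1}(0;\tau)=0$, so $\phi(0;\tau)=A_{0}\,\ch_{\frac14,0}(0;\tau)$, and the value of the Ramond character at $z=0$ is the constant Witten index, forcing $\chi=\phi(0;\tau)$ to be a constant. To see that the cross product of the weight-$(-\tfrac12)$ pair $(f_{0},f_{1})$ with the shadow collapses to this single scalar times one universal integral, I would establish the bilinear theta identity rewriting $f_{0}\vartheta_{1}^{\mathrm{sh}}-f_{1}\vartheta_{0}^{\mathrm{sh}}$ as $\phi(0;\tau)$ times a fixed unary theta, whence the correction is $\chi$ times the universal Eichler integral $C$. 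Carefully tracking the multiplier systems through this pairing, and confirming the $\phi$-independence of $C$, is the delicate step; everything else is linear algebra together with the standard Appell--Lerch/Eichler-integral completion of the $N=4$ character.
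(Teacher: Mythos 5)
Your proposal is correct and follows essentially the same route as the paper: theta decomposition of $\phi$, collapse of the massive characters into $F^\phi\,\theta_1^2/\eta^3$, a $2\times 2$ determinant/Cramer computation yielding $F^\phi=f_0S_1-f_1S_0$ (run in reverse for existence), and the completion imported from $\widehat{S}_\ell$ together with the $z=0$ identity $f_0\theta_3(2\tau)+f_1\theta_2(2\tau)=\chi$. The only slip is notational: since $\ch_{\frac14,\half}(0;\tau)=-2$ rather than $0$ (the $\mu$-term survives even though $\theta_1(0;\tau)=0$), the constant is $\chi=A_0-2B_0$ rather than $A_0$, which is precisely the coefficient of $\ch_{\frac14,0}$ after your reorganization, so nothing breaks.
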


The functions $S_0$, $S_1$, and $C$ are independent of $\phi$. Thus, $F^\phi$ can be computed directly from its theta decomposition. In fact, we provide some additional tools to compute the components $f_{0} $ and $f_{1} $ in Proposition \ref{prop:EZcoeff} below. To obtain Theorem \ref{thm:MainTheoremF}, the functions $S_0$ and $S_1$ must be found and analyzed.\ They are intimately related to the $\mu$-function considered by Zwegers \cite{Zwegers-Thesis}, which is fundamental to the $N=4$ characters.\ We establish the following.
 
\begin{proposition}
\label{prop:MainProp}
 The functions $S_0$ and $S_1$ are the holomorphic parts of certain indefinite theta functions $\vartheta^0$ and $\vartheta^1$ respectively, and they are both mock modular forms (with a multiplier system) on $\Gamma_0 (4)$.\ Furthermore,
 $(S_0, S_1)$ is a vector-valued mock modular form of weight one on $\operatorname{SL}_2(\mathbb{Z})$.
\end{proposition}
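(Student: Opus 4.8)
The plan is to read off the shapes of $S_0$ and $S_1$ that are forced by the decomposition \eqref{eqn:N=4Decomp}. Every massive character $\ch_{\frac{1}{4}+n,\half}$ is a pure $q$-power multiple of the single meromorphic weak Jacobi form $\theta_1(z;\tau)^2/\eta(\tau)^3$, so summing the massive tower gives, up to normalization, $\sum_n B_n\ch_{\frac{1}{4}+n,\half}=F^\phi(\tau)\,\theta_1(z;\tau)^2/\eta(\tau)^3$, whence $A_0\ch_{\frac{1}{4},0}=\phi-F^\phi\,\theta_1^2/\eta^3$. Taking the index-one theta decomposition of each term — writing $\phi=f_0\theta_{1,0}+f_1\theta_{1,1}$, $\theta_1^2/\eta^3=g_0\theta_{1,0}+g_1\theta_{1,1}$, and $\ch_{\frac{1}{4},0}=\mu_0\theta_{1,0}+\mu_1\theta_{1,1}$ — and eliminating $A_0$ between the two scalar equations yields $F^\phi=(\mu_1 f_0-\mu_0 f_1)/(\mu_1 g_0-\mu_0 g_1)$. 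Comparing with $F^\phi=f_0 S_1-f_1 S_0$ identifies $S_j=\mu_j/W$ with $W:=\mu_1 g_0-\mu_0 g_1$, where the only non-modular ingredient is the Appell--Lerch sum underlying $\ch_{\frac{1}{4},0}$, a specialization of Zwegers' $\mu$-function.

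A crucial preliminary is that $W$ be a genuine (meromorphic) modular form rather than a mock one: since $g_0,g_1$ are holomorphic while only $\mu_0,\mu_1$ carry shadows, $S_j=\mu_j/W$ can be mock only if the shadow contributions of $\mu_1 g_0$ and $\mu_0 g_1$ cancel in $W$. I expect $W$ to collapse to a constant multiple of $\eta^{-3}$, so that each $S_j$ is essentially $\eta^3$ times an Appell--Lerch sum — precisely the weight-one mock object governed by Zwegers' work. For part~(i) of Proposition~\ref{prop:MainProp} I would then, following \cite{Zwegers-Thesis}, rewrite each $S_j$ as the holomorphic part of a real-analytic indefinite theta function $\vartheta^j$ of signature $(1,1)$: one adjoins the non-holomorphic completion built from the error-function kernel (Zwegers' $R$), obtaining $\widehat{\vartheta}^{\,j}$ whose holomorphic part is $S_j$ and whose derivative $\partial_{\taub}\widehat{\vartheta}^{\,j}$ is a multiple of the complex conjugate of a holomorphic theta series (the shadow). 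This indefinite-theta presentation is what makes the weight-one completion and its shadow explicit.

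The mock modularity statements follow by transporting Zwegers' transformation laws for $\mu$ and $R$ through the fixed factor $W$. The $T$-transformation is a direct $q$-expansion check that fixes the multiplier. The substance is the $S$-transformation $\tau\mapsto-1/\tau$: Zwegers' completion shows that $\widehat{\vartheta}^{\,0}$ and $\widehat{\vartheta}^{\,1}$ transform into one another by the weight-one Weil representation attached to the index-one theta system, while the spurious non-holomorphic terms reassemble into the period integral of the shadow. Restricting to the subgroup fixing each component individually gives the claim on $\Gamma_0(4)$, and the $\SL_2(\Z)$-covariance of the pair $(\widehat{\vartheta}^{\,0},\widehat{\vartheta}^{\,1})$ descends to $(S_0,S_1)$, giving the weight-one vector-valued statement on $\SL_2(\Z)$.

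I expect two points to be the main obstacles. First, verifying the collapse of $W$ (equivalently, the exact cancellation of the shadows of $\mu_1 g_0$ and $\mu_0 g_1$), without which $S_j$ would be a ratio of mock forms and the proposition would fail. Second, the $S$-transformation bookkeeping: one must check that the corrections coming from $R$ cancel the modular anomaly of the holomorphic Appell--Lerch sums \emph{precisely}, so that the defect is exactly a period integral of a holomorphic theta series and nothing more, and simultaneously pin down the correct multiplier and Weil representation so that weight-one vector-valued modularity holds on all of $\SL_2(\Z)$ even though each scalar $S_j$ is modular only on $\Gamma_0(4)$. Keeping the half-integral-weight contributions of $\eta$ and of the theta functions consistent throughout — including across the division by $W$ — is where the care is required.
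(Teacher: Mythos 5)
Your plan follows essentially the same route as the paper: $S_0,S_1$ are exactly the index-one theta coefficients of $\theta_1^2\mu=\eta^3\ch_{\frac{1}{4},0}$ (your $W$ does collapse to $\eta^{-3}$, precisely because $S_0\theta_3(2\tau)+S_1\theta_2(2\tau)=\eta^3$, i.e.\ $\ch_{\frac{1}{4},0}(0;\tau)=1$), and the paper then realizes each $S_\ell$ as the holomorphic part of a Zwegers signature-$(1,1)$ indefinite theta function $\theta_{a(\ell),b}^{c_1,c_2}$ for the form with Gram matrix $\left(\begin{smallmatrix}1&2\\2&2\end{smallmatrix}\right)$ and reads the $T$- and $S$-transformations off Zwegers' Corollary 2.9, exactly as you propose. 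The steps you leave open --- producing the explicit indefinite-theta data and the multiplier bookkeeping --- are the computational content of Lemma \ref{lem:Selltheta} and Lemma \ref{lem:thetaellmod}, and you have correctly identified where that work lies.
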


We are also able to use Zwegers' work on the $\mu$-function to discuss the completions of the functions $S_0$ and $S_1$, and thereby the completion of $F^\phi$ also.\ The functions $S_0$ and $S_1$ appear elsewhere in the literature (e.g., in \cite[Display (8.13)]{BFOR-Harmonic}), though to the authors' knowledge, without the descriptions, analysis, and results found here.

As a first application of the theory developed thus far, we analyze the elliptic genus of a $K3$ surface. Unsurprisingly, some of the results we obtain may be found elsewhere in the literature (for example, \cite{EguchiHikami-Mock}). However, as another application, we discuss super vertex operator algebras (SVOAs).

While Monstrous, Mathieu, and Umbral moonshine are ostensibly concerned with the existence and construction of
infinite-dimensional graded modules for finite groups, their identification as vertex operator algebras (VOAs) or SVOAs is 
fundamental.\ As is well-known, the existence of the Monster module $V^\natural$ was nearly obtained by Atkin, Fong, and Smith \cite{AFS-Head}, while its construction as a VOA by Frenkel, Lepowsky, Meurman \cite{FLM} completed the picture (of course, the moonshine-related aspects came later by Borcherds \cite{Borcherds-Monstrous}).\ Gannon proved the existence of the Mathieu module \cite{Gannon-Mathieu}, and his far-reaching method was modified in \cite{DGO-UmbralProof} to establish the existence of the remaining Umbral modules.\ Some of the Umbral moonshine modules have been constructed as twisted modules of SVOAs \cite{ChengDuncan-Meromorphic, DuncanHarvey-UniqueNiemeier, DuncanODesky-SuperVAs}. However, it remains to construct the others, including the elusive Mathieu moonshine module. 

Rather than conjecture the existence of SVOAs or attempt to construct them, we could instead begin with such a structure and ask to what extent it parallels the conjectured Mathieu moonshine SVOA.   In this context, given an arbitrary SVOA $V$, questions arise. For example, (i) does $V$ contain an $N=4$ subalgebra? Additionally, (ii) do the two-variable trace functions associated to the modules for $V$ give rise to mock modular forms? Probing these questions was the motivation for our second example.

Work of the last three authors \cite{MTY-N=4} shed some light on (i).\ Criteria were developed to determine whether a given SVOA contains the $N=4$ superconformal algebra.\ One method, in particular, involved a particular odd lattice of rank six called $L+$ and its associated super lattice theory $V_{L+}$, which indeed contains this superconformal algebra (with central charge six).\ These ideas recently led to a proof \cite{HoehnMason-Most} that almost all super lattice theories
$V_L$ defined by an odd unimodular lattice $L$ of rank $24$ also contain the  superconformal algebra,
arising from an embedding $L+\subseteq L$.\  Theorem \ref{thm:MainTheoremF} provides answers to (ii) in case the relevant trace functions are weak Jacobi forms of weight zero and index one, not necessarily on the full Jacobi group.\ This holds, for example, in the case of $V_{L+}$.\ In this setting
we apply Theorem \ref{thm:MainTheoremF} to the trace function associated to a distinguished twisted module which is a type of `Ramond' sector.

The paper is organized as follows. Section \ref{Section:Background} deals with various automorphic objects.
Section~\ref{Section:ChiExp} is concerned with superconformal character expansions and contains the proof of Theorem~\ref{thm:MainTheoremF}~(i).
Section \ref{Section:S1S2} studies the functions $S_0$ and $S_1$ and contains the proof of Proposition \ref{prop:MainProp}, while Section \ref{Section:FunctionF} includes development of  results about $F^{\phi}$ and the proof of Theorem \ref{thm:MainTheoremF}~(ii). Section \ref{Section:TraceFunctions} discusses trace functions of SVOAs and describes the relevant  parity root elliptic genus.\ The elliptic genus of a $K3$ surface and $V_{L+}$ are discussed in Section \ref{Section:K3} and Section \ref{Section:Rank6}, respectively.

\section{Background on relevant automorphic forms}\label{Section:Background}
For $k $ and $m\in  \mathbb{N}$, a holomorphic function $\phi \colon \mathbb{C} \times \mathbb{H}\to \mathbb{C}$ is a \textit{weak Jacobi form of weight $k$ and index $m$} on $\Gamma_{\phi}\le \SL_{2}(\Z)$, if for all 
 $\gamma =\left(\begin{smallmatrix} a & b \\ c &d \end{smallmatrix}\right) \in \Gamma_{\phi}$ and $(\lambda ,\mu)\in \Z^{2}$ we have 
\begin{align}
\notag
 \phi \left( \frac{z}{c\tau +d};\frac{a\tau +b}{c\tau +d} \right) &=  (c\tau +d)^k e^{2\pi \im m \left(\frac{cz^2}{c\tau +d}\right)}\phi (z; \tau),
 \\ 
 \notag
 \phi (z +\lambda \tau +\mu ; \tau ) &= e^{-2\pi \im m \left(\lambda^2 \tau +2\lambda z\right)} \phi (z; \tau),
\end{align}
and $\phi$ has an expansion of the form
\begin{equation}
\notag
 (c\tau +d)^{-k} e^{-2\pi \im m \left(\frac{cz^2}{c\tau +d}\right)}\phi \left( \frac{z}{c\tau +d};\frac{a\tau +b}{c\tau +d} \right) =  \sum_{\substack{n,r\in  \mathbb{Z} \\ n\geq 0}} c(n,r) q^{n} \zeta^r
\end{equation}
for any $\gamma \in \operatorname{SL}_2(\mathbb{Z})$, where $q=e^{2\pi\im\tau}$ and $\zeta=e^{2\pi\im z}$. The elements $(\gamma ,(\lambda,\mu))$ generate $\Gamma_{\phi}^{J}:= \Gamma_{\phi}\ltimes \Z^2 $ which is a subgroup of the full  Jacobi group $\Gamma^J=\SL_{2}(\Z)\ltimes \Z^2 $ (e.g.\ \cite{EZ-JacobiForms, BFOR-Harmonic}). 
If $\phi$ satisfies the stronger condition that $c(n,r)=0$ when $4mn<r^2$, then it is simply called a \textit{Jacobi form}.  

Let $\Gamma (M)$ be the principal congruence subgroup of $\SL_{2}(\Z)$ defined by
\begin{align}
\notag 
\Gamma (M):=\left\{\begin{pmatrix}
a&b\\c&d
\end{pmatrix}\in \operatorname{SL}_2(\mathbb{Z}) \, \, \biggl \vert \,\, \begin{pmatrix}
a&b\\c&d
\end{pmatrix} 
\equiv 
\begin{pmatrix}
1&0\\0 &1
\end{pmatrix} 
\bmod M \right\}.
\end{align}
Then we say that $\phi$ is a Jacobi form  of level $M$ if $\phi$ satisfies the above definition of a Jacobi form  and $M$ is the least positive integer such that  $\Gamma(M)\le \Gamma_{\phi}$.

Jacobi forms are related to classical Jacobi theta series with characteristic\footnote{In \cite{FK-ThetaConstants} $\theta$ is used in place of $\vartheta$ and their characteristic notation differs from ours by a factor of $\frac{1}{2}$.} $a,b\in\R$ defined by (see, for example, \cite[Definition 1.1]{FK-ThetaConstants})
\begin{align}
\thab{a}{b}(z; \tau)&:
=\sum_{n\in \mathbb{Z}}e^{2\pi \im (n+a)b}q^{\frac{1}{2}(n+a)^2}\zeta ^{n+a}
.
\notag 
\end{align}
These functions satisfy
\begin{align}
\thab{a+1}{b}
(z; \tau )
&=
\thab{a}{b}
(z; \tau ),
\label{eq:aplus1}
\\
\thab{a}{b+1}
(z; \tau )
&=e^{2\pi \im a}
\thab{a}{b}
(z; \tau ),
\notag
\\
\thab{-a}{-b}
(-z; \tau )
&=
\thab{a}{b}
(z; \tau ).
\label{eq:minus}
\end{align}
Additionally, for all $\lambda,\mu  \in \R$ we have the ``continuous spectral flow'' 
\begin{align}
\thab{a}{b}(z+\lambda  \tau+\mu; \tau)&=
e^{-2\pi \im \lambda (b+\mu)}q^{-\frac{1}{2}\lambda ^2}\zeta^{-\lambda } 
\thab{a+\lambda }{b+\mu}(z; \tau),
\label{eq:thetaflow}
\end{align}
and in particular,  for all $\lambda,\mu \in\Z$ we find 
\begin{align}
\thab{a}{b}(z+\lambda  \tau+\mu; \tau) &=
e^{2\pi \im (\mu a-\lambda  b)}q^{-\frac{1}{2}\lambda ^2} \zeta ^{-\lambda }
\thab{a}{b} (z; \tau ). 
\notag
\end{align} 
We also note the modular transformation properties under the action of the
standard $\SL_2(\Z)$ generators $S=
\left(
\begin{smallmatrix}
0 & 1 \\ 
-1 & 0
\end{smallmatrix}\right) $ and $T= \left(
\begin{smallmatrix}
1 & 1 \\ 
0 & 1
\end{smallmatrix}\right) $ (with relations $(ST)^{3}=-S^{2}=I$) 
\begin{equation}
\label{eqn:ThetaTransforms}
\begin{aligned}
\thab{a}{b}(z; \tau +1) &=e^{- \im \pi a(a+1)}
\thab{a}{b+a+\frac{1}{2}}
(z; \tau ),  
  \\
\thab{a}{b} \left(\frac{z}{\tau }; -\frac{1}{\tau }\right) &=
(- \im \tau )^{\frac{1}{2}}e^{2\pi iab}e^{ \im\pi \frac{z^{2}}{ \tau} }\thab{b}{-a}(z; \tau ).  
\end{aligned}
\end{equation}
Special cases of Jacobi theta series include  $\theta_j(z; \tau)$ defined for $j=1,2,3,4$ by
\begin{equation}
\notag 
 \theta_1  := -\thab{\frac{1}{2}}{\frac{1}{2}}, \quad \theta_2 := \thab{\frac{1}{2}}{0}, \quad \theta_3 := \thab{0}{0}, \quad \text{and} \quad \theta_4 := \thab{0}{\frac{1}{2}},
\end{equation}
(the notation differs throughout the literature), which are Jacobi forms\footnote{\label{note} See \cite{CG-JacobiThetaSeries}  for details about Jacobi forms of  half-integral index or weight and with a character.} of weight and index $\frac{1}{2}$, and level $2$ with a character (or multiplier system). In each of these cases, we also need the theta-constants defined by
\begin{equation}
\notag 
 \theta_j (\tau) := \theta_j(0; \tau).
\end{equation}

The theta decomposition of a Jacobi form of index $m$ employs the  theta series 
\begin{align}
\theta_{m,\mu}(z; \tau)&:= \sum_{\ell =\mu \bmod{2m}}q^{\frac{\ell^2}{4m}}\zeta^{\ell} = \thab{\frac{\mu}{2m}}{0}(2mz; 2m\tau),
\label{eq:EZtheta}
\end{align}
for $ \mu\in \Z_{2m}$ \cite[Section 5]{EZ-JacobiForms}.
Following  \eqref{eq:thetaflow}, these satisfy the ``spectral flow'' relations
\begin{equation}
\label{eq:EZthetaflow}
\begin{aligned}
\theta_{m,\mu}\left(z+\tfrac{1}{2}; \tau\right) &= e^{\pi \im \mu} \theta_{m,\mu}(z; \tau)
\\ \theta_{m,\mu}\left(z+\tfrac{\tau}{2}; \tau\right) &= q^{-\tfrac{m}{4}}\zeta^{-m} \theta_{m,\mu +m}(z; \tau).
\end{aligned}
\end{equation}
 We also note that
\begin{equation}
\label{eqn:ThetaRelation}
\theta_{1,0}(z; \tau)=\theta_{3}(2z; 2\tau)  \quad \text{and} \quad \theta_{1,1}(z; \tau)= \theta_{2}(2z; 2\tau).
\end{equation}
	
All of these theta series are special cases of theta series attached to lattices. Let $Q$ be a positive definite rational-valued quadratic form on a lattice $L$ with  $\operatorname{rank}(L)$ even, and  $B(\alpha ,\beta )=Q(\alpha +\beta)-Q(\alpha)-Q(\beta)$ the associated bilinear form. For fixed $h\in L$ we find 
\begin{equation}
\label{eq:thetaLh}
 \theta_{L}^h (z; \tau) := \sum_{\alpha \in L} q^{Q(\alpha)} \zeta^{B(\alpha ,h)},
\end{equation}
is a Jacobi form (cf.\ footnote~\ref{note}) with a character of weight $\operatorname{rank}(L)/2$, index $Q(h)$, and level $M$ equal to the level of $Q$. More specifically, if $A$ is the matrix representing $Q$, then $M$ is the smallest positive integer such that $M(A^{-1})$ has integral entries. 

We will also be utilizing the $\mu$-function of Zwegers \cite[Proposition 1.4]{Zwegers-Thesis} defined for $z_1,z_2\in \mathbb{C}\setminus (\mathbb{Z}\tau +\mathbb{Z})$ and $\tau \in \mathbb{H}$ by
\begin{equation}
 \notag 
 \mu \left(z_1,z_2;\tau\right) := \im \frac{e^{\pi \im z_1}}{\theta_1 \left(z; \tau_2\right)} \sum_{n\in \mathbb{Z}} \frac{(-1)^n q^{\frac{1}{2}n(n+1)} e^{2\pi \im n z_2}}{1-e^{2\pi \im z_1}q^n}.
\end{equation}
In particular, we need\footnote{Our definition here is precisely as in \cite{EguchiHikami-Mock}, but is equal to $ \im \mu$ for the $\mu$ in \cite{Zwegers-Thesis}. }
\begin{equation}
 \label{eqn:Bigmu}
 \mu \left(z; \tau\right) := \mu \left(z,z;\tau \right) = \im \frac{\zeta^{\frac{1}{2}}}{\theta_1 (z; \tau)} \sum_{n\in \mathbb{Z}} \frac{(-1)^n q^{\frac{1}{2}n(n+1)} \zeta^n}{1-\zeta q^n}.
\end{equation}
We note that although $\mu \left(z; \tau\right)$ is an elliptic function  
\begin{align}
	\label{eqn:muElliptic}
\mu(z+\lambda \tau+\mu; \tau)= \mu(z; \tau),\quad 	(\lambda,\mu)\in\Z^{2},
\end{align}
it is a weight $\half $, index $0$, mock Jacobi form for $\Gamma^{J}$ with a non-holomorphic completion (see \cite[Proposition 1.11]{Zwegers-Thesis} or \cite{EguchiHikami-Mock} for details)
\begin{align*}
\widehat{\mu}(z; \tau):=\mu(z; \tau)-C(\tau),
\end{align*} 
where we define
\begin{equation}
\label{eqn:completionC}
C(\tau):=\half\sum_{m\in\Z}(-1)^{m}\sgn \left(m+\half \right)\beta\left(2\left(m+\half \right)^2 y\right)q^{-\half (m+\half )^2}.
\end{equation}
Here, $\operatorname{sgn}$ is the typical signature function whose output is $1,0,-1$, depending on whether the input is positive, 0, or negative, respectively, while $y:=\operatorname{im}(\tau)$ and $\beta(x)=\int_{x}^{\infty}u^{-\half}e^{-\pi u}\, du$ for real $x\ge 0$.
 Note that $C(\tau)=\tfrac{ \im}{2} R(0;\tau)$ for a more general function $R(z;\tau)$ of \cite[below Lemma 1.7]{Zwegers-Thesis} and $C(\tau)= \half R(0;\tau)$ in \cite[(3.8)]{EguchiHikami-Mock}  (and is not to be confused with $R$ of \eqref{eq:Rchar} below). Then one finds from \cite[Proposition 1.11]{Zwegers-Thesis} (see also \cite[(3.14)]{EguchiHikami-Mock}) that
\begin{align}
\widehat{\mu}(z; \tau+1)&=e^{- \tfrac{\im \pi}{4}}\widehat{\mu}(z; \tau),
\notag
\\
\widehat{\mu}\left(\frac{z}{\tau}; -\frac{1}{\tau}\right)&=-(- \im\tau)^{\half}\widehat{\mu}(z; \tau),
\notag
\\
\widehat{\mu}(z+\lambda \tau+\mu; \tau)&=\widehat{\mu}(z; \tau),\quad 	(\lambda,\mu)\in\Z^{2}.
\notag
\end{align}

\section{Superconformal character expansions}\label{Section:ChiExp}
\subsection{Superconformal algebra}
Let $\mathcal{A}$ denote the $N=4$ superconformal algebra of central charge $6$, which contains subalgebras isomorphic to the affine Lie algebra $\widehat{\mathfrak{sl}}_2$ of level $1$ and the Virasoro algebra of central charge $6$ (see, for example, \cite{MTY-N=4}). One can isolate elements $\omega$ and $h$ of $\mathcal{A}$ so that their associated zero-mode endomorphisms $\frac{1}{2}h(0)$ and $L(0):=\omega(1)$ on the unitary irreducible highest weight Neveu-Schwarz (NS) and Ramond (R) representations $V^{NS}_{\alpha ,\beta}$ and $V^{R}_{\alpha ,\beta}$
of $\mathcal{A}$ have eigenvalues $\alpha$ and $\beta$, respectively.\footnote{The element $h$ here is often denoted by $J^3$ elsewhere in the literature.} 
We are able to normalize $h$ so that $h(0)$ has integral eigenvalues on each $V^{R}_{\alpha ,\beta}$ (though the factor of $\frac{1}{2}$ on $h(0)$ above means $\beta \in \frac{1}{2}\mathbb{Z}$).

The Ramond characters  in this setting are given by
\begin{equation}
\notag
 \operatorname{ch}_{\alpha , \beta} (z; \tau) = \operatorname{tr}_{V^{R}_{\alpha ,\beta}} \left((-1)^{h(0)} \zeta^{h(0)} q^{L(0)-\frac{1}{4}}\right).
\end{equation}
The form of $\operatorname{ch}_{\alpha , \beta}$ depends greatly on whether the representation $V^{R}_{\alpha ,\beta}$ is massive (non-BPS, non-supersymmetric, long) or massless (BPS, supersymmetric, short). The massive representations are indexed by $\alpha \in \frac{1}{4} +\ell  $ for $\ell \in \mathbb{N}$ and $\beta =\frac{1}{2}$ with characters  \cite{ET-N4characters}
\begin{equation}
\label{eqn:N=4long}
 \operatorname{ch}_{\frac{1}{4}+\ell ,\frac{1}{2}} (z; \tau) = q^{\ell -\frac{1}{8}} \frac{\theta_1 (z; \tau)^2}{\eta (\tau)^3}.
\end{equation}
The massless representations are indexed by $\alpha =\frac{1}{4}$ and $\beta =\frac{j}{2}$ for $j\in \{0,1\}$, and their characters can be given by
\begin{equation}
 \notag 
 \operatorname{ch}_{\frac{1}{4} ,0} (z; \tau) = \frac{\theta_1 (z; \tau)^2}{\eta (\tau)^3}\mu (z; \tau),
\end{equation}
for $\mu(z; \tau)$ of \eqref{eqn:Bigmu}, and (using a common relation, for example, \cite[(1.10)]{EOT-Mathieu})
\begin{equation}
\label{eqn:N=4short1}
 \operatorname{ch}_{\frac{1}{4} ,\frac{1}{2}} (z; \tau) = q^{-\frac{1}{8}} \frac{\theta_1 (z; \tau)^2}{\eta (\tau)^3} -2\frac{\theta_1 (z; \tau)^2}{\eta (\tau)^3}\mu (z; \tau).
\end{equation}

We are interested in decomposing weak Jacobi forms into the characters above. In fact, this idea was employed in \cite{CDH-Umbral,CDH-UmbralNiemeier} and leads to the definition of extremal Jacobi forms. The analysis there includes more general higher index Jacobi forms stemming from Niemeier lattices. The work surrounding Umbral moonshine also takes a slightly different path than that taken here. In particular,  \textit{loc.\ cit.} decomposes a meromorphic Jacobi form obtained from a Jacobi form $\phi$ as discussed by Zwegers \cite{Zwegers-Thesis}.

\subsection{Character expansion of $\phi$ }
Let $\phi(z; \tau)$ be a weak Jacobi form 
of weight $0$ and index $1$ for some of level $M$ subgroup $\Gamma_{\phi}^{J}$ of the Jacobi group. As such, $\phi (0; \tau)$ is a holomorphic modular form of weight $0$ and therefore constant.
In order to prove  Theorem~\ref{thm:MainTheoremF}, let us first assume that 
we may decompose $\phi$ into index $1$  Ramond characters of the $N=4$ superconformal algebra, i.e., there exist constants $A_0$, $B_n$ such that
\begin{align*}
 \phi(z; \tau)  &
 = A_0 \ch_{\frac{1}{4},0}(z; \tau) + B_0 \ch_{\frac{1}{4},\frac{1}{2}}(z; \tau) + \sum_{n=1}^\infty B_n \ch_{\frac{1}{4}+n,\frac{1}{2}}(z; \tau).
\end{align*}
Thus, using \eqref{eqn:N=4long}--\eqref{eqn:N=4short1} we obtain
\begin{align} 
\phi (z; \tau)  &= \left(A_0 -2B_0\right) \ch_{\frac{1}{4},0}(z; \tau) + B_0 q^{-\frac{1}{8}} \frac{\theta_1 (z; \tau)^{2}}{\eta (\tau)^{3}} + \sum_{n=1}^\infty B_n q^{n -\frac{1}{8}} \frac{\theta_1(z; \tau)^{2}}{\eta (\tau)^{3} }
\notag
\\&= \left(A_0 -2B_0\right)  \frac{\theta_1 (z; \tau)^{2}}{\eta (\tau)^{3}} \mu (z; \tau)  + \sum_{n=0}^\infty B_n q^{n -\frac{1}{8}} \frac{\theta_1(z; \tau)^{2}}{\eta (\tau)^{3} }.
\label{eqn:N=4Decomp01}
\end{align}
Reorganizing we obtain
\begin{align}
\eta (\tau)^{3}\phi(z; \tau)=  \chi\,\theta_1 (z; \tau)^{2} \mu (z; \tau) 
+F^{\phi}(\tau)\theta_1 (z; \tau)^{2},
\label{eq:phi_f}
\end{align}
where 
\begin{align}
\label{eq:f_chi_def}
\chi :=\phi(0; \tau)=A_{0} -2B_{0},
\end{align}
 (since $\ch_{\frac{1}{4},0}(0; \tau)=1$) and for  $F^{\phi}$ of  \eqref{eqn:Fdefn}.

\begin{remark}
Taking $\phi = 2\phi_{0,1}$ in \eqref{eqn:N=4Decomp01} gives the well-known decomposition of  the $K3$ elliptic genus \cite{EOT-Mathieu} with $\chi^{K3}=24$ and
\begin{align*}
 F^{K3}(\tau) &= 2q^{-\frac{1}{8}} \left( -1 +45q +231q^{2} +770q^{3} +2277q^{4} +\cdots \right)
\\ &= q^{-\frac{1}{8}} \left( B_0 + B_1q +B_2q^{2} +B_3q^{3} +B_4q^{4} +\cdots \right).
\end{align*}
In the literature, it is common for $F^{K3}$ to be denoted by $H^{(2)}$ or $h^{(2)}$, and this is the mock modular object that interested \cite{EOT-Mathieu}, and it is one of the objects of interest in \cite{CDH-Umbral,CDH-UmbralNiemeier} as well.
\end{remark}

In order to study $F^{\phi}$ 
we analyze the theta decomposition of \eqref{eq:phi_f}  in terms of $\theta_{1,\mu}$ of \eqref{eq:EZtheta} (cf.\ \cite[Section~5]{EZ-JacobiForms}).
The decomposition of the $F^{\phi}(\tau)\theta_1 (z; \tau)^{2}$ term follows from the identity\footnote{This can be shown by noting the ratio of each side of \eqref{eq:theta1sqid} is a holomorphic weight $0$, index $0$, Jacobi form and is therefore a constant which evaluates to unity.} 
\begin{align}
	\theta_{1}(z; \tau)^{2}=\theta_{2}(2\tau)\theta_{3}(2z; 2\tau)-\theta_{3}(2\tau)\theta_{2}(2z; 2\tau),
	\label{eq:theta1sqid}
\end{align}
using \eqref{eqn:ThetaRelation}. Following \eqref{eqn:muElliptic}, we similarly find that the $\theta_1 (z; \tau)^{2} \mu (z; \tau)$ term has the decomposition
\begin{align}
\label{eq:theta1_mu}
\theta_1 (z; \tau)^{2} \mu (z; \tau)
=S_{0}(\tau)\theta_{3}(2z; 2\tau)+S_{1}(\tau)\theta_{2}(2z; 2\tau),
\end{align}
 where here the coefficient functions $S_{r}(\tau)$ for $r=\ell \bmod 2$ are the $\zeta$-coefficients
\begin{align}
	\label{eq:Sdef}
	S_{\ell}(\tau):=[q^{\frac{1}{4}\ell^{2}}\zeta^{\ell}]\left(\theta_1 (z; \tau)^{2} \mu (z; \tau) \right).
\end{align}
 Since  $\ch_{\frac{1}{4},0}^{(1)}(0; \tau)=1$ we also obtain from \eqref{eq:phi_f} that  
\begin{align}
S_{0}(\tau)\theta_3(2\tau)+S_{1}(\tau)\theta_2(2\tau)=\eta(\tau)^{3}.
\label{eq:S0S1eta}
\end{align}
Explicit expressions for $S_{1},S_{2}$ and their mock modular properties are explored in the next section.
Lastly, the theta decomposition of $\phi(z; \tau)$ is
\begin{align}
	\label{eqn:EZdecomp}
	\phi(z; \tau)=& \sum_{n=0}^\infty \sum_{r\in  \mathbb{Z}} c(n,r) q^{n} \zeta^r
	= f_{0}(\tau)\theta_{3}(2z; 2\tau)+f_{1} (\tau)\theta_{2}(2z; 2\tau),
\end{align}
where for $r=0,1$, we define 
\begin{align*}
	f_{r}(\tau):=f_{r}^{\phi}(\tau):=\sum_{m =0}^\infty a_{r}(m)q^{\tfrac{m}{4}},\quad
	a_{r}(m):=c\left( \tfrac{m+r^{2}}{4},r\right).
\end{align*}
From \eqref{eq:f_chi_def} we find
\begin{align}
	f_{0}(\tau)\theta_{3}(2\tau)+f_{1} (\tau)\theta_{2}(2\tau)=\chi.
	\label{eq:f01chi}
\end{align}
Together, \eqref{eq:phi_f}, \eqref{eq:theta1sqid}, \eqref{eq:S0S1eta},  and \eqref{eqn:EZdecomp} imply
\begin{align}
	\eta(\tau)^{3}f_{0}(\tau)&=\chi  S_{0}(\tau)+\theta_2(2\tau) F^{\phi}(\tau),
	\label{eq:hS0f}
	\\
	\eta(\tau)^{3}f_{1}(\tau)&=\chi  S_{1}(\tau)-\theta_3(2\tau) F^{\phi}(\tau).
	\label{eq:hS1f}
\end{align}
Thus, $F^{\phi}$ is determined by the theta coefficient functions $f_{r}$ and the $S_{r}$ as follows.
\begin{proposition}
	\label{prop:Fhr}
	We have
	\begin{align}
		F^{\phi}(\tau)=f_{0}(\tau)S_{1}(\tau)-f_{1}(\tau)S_{0}(\tau).
		\label{eq:Fphi}
	\end{align}
\end{proposition}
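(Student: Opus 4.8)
The plan is to treat the two relations \eqref{eq:hS0f} and \eqref{eq:hS1f} as a linear system in the quantities $F^{\phi}$ and $\chi$, and to eliminate the $\chi$-contribution by forming the linear combination with coefficients $S_1$ and $S_0$. The crucial feature I want to exploit is that the very same pair $(S_0,S_1)$ appears both as the multipliers of $\chi$ in \eqref{eq:hS0f}--\eqref{eq:hS1f} and in the normalization \eqref{eq:S0S1eta}; consequently, the combination that kills $\chi$ will simultaneously collapse the two $F^{\phi}$-terms into a single multiple of $\eta(\tau)^3$.

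Concretely, I would multiply \eqref{eq:hS0f} through by $S_1(\tau)$ and \eqref{eq:hS1f} through by $S_0(\tau)$, obtaining
\begin{align*}
\eta(\tau)^3 f_0(\tau) S_1(\tau) &= \chi\, S_0(\tau) S_1(\tau) + \theta_2(2\tau)\, F^{\phi}(\tau)\, S_1(\tau),\\
\eta(\tau)^3 f_1(\tau) S_0(\tau) &= \chi\, S_0(\tau) S_1(\tau) - \theta_3(2\tau)\, F^{\phi}(\tau)\, S_0(\tau).
\end{align*}
Subtracting the second line from the first cancels the symmetric term $\chi\, S_0 S_1$, leaving
\begin{align*}
\eta(\tau)^3 \left( f_0(\tau) S_1(\tau) - f_1(\tau) S_0(\tau) \right) = F^{\phi}(\tau) \left( \theta_2(2\tau) S_1(\tau) + \theta_3(2\tau) S_0(\tau) \right).
\end{align*}
By \eqref{eq:S0S1eta} the parenthesized factor on the right is exactly $\eta(\tau)^3$, so both sides carry the common factor $\eta(\tau)^3$.

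Since $\eta(\tau)$ is nonvanishing on $\mathbb{H}$, I can divide through by $\eta(\tau)^3$ to conclude $F^{\phi}(\tau)=f_0(\tau) S_1(\tau)-f_1(\tau) S_0(\tau)$, as claimed. There is no genuine analytic obstacle here: the whole argument is a two-line elimination, and the only point deserving a word of justification is the final cancellation of $\eta^3$, which is legitimate precisely because $\eta$ has no zeros on the upper half-plane. The one thing I would watch carefully is the sign bookkeeping — the $+\theta_2$ in \eqref{eq:hS0f} versus the $-\theta_3$ in \eqref{eq:hS1f} must align so that, after subtraction, the coefficient of $F^{\phi}$ is the \emph{sum} $\theta_2(2\tau) S_1(\tau)+\theta_3(2\tau) S_0(\tau)$ rather than a difference, since it is exactly this sum that \eqref{eq:S0S1eta} identifies with $\eta(\tau)^3$.
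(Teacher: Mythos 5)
Your proof is correct and is essentially the paper's argument: both eliminate $\chi$ from the pair \eqref{eq:hS0f}--\eqref{eq:hS1f} using the normalization \eqref{eq:S0S1eta}, the paper doing so via a $2\times 2$ matrix inversion and yours by a direct linear combination. If anything, your version is slightly cleaner, since it avoids the implicit division by $\det(M)$ and needs only the nonvanishing of $\eta$ on $\mathbb{H}$.
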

\begin{proof}
	Combine \eqref{eq:S0S1eta} and \eqref{eq:f01chi} into the matrix equation
	\[
	M\begin{pmatrix}
		\theta_{3}(2\tau) \\ \theta_{2}(2\tau)
	\end{pmatrix}
	=
	\begin{pmatrix}
		\chi  \\ \eta(\tau)^{3}
	\end{pmatrix},
	\quad
	M:=\begin{pmatrix}
		f_{0}(\tau) & f_{1}(\tau)
		\\
		S_{0}(\tau) & S_{1}(\tau)
	\end{pmatrix}.
	\]
	Hence we have
	\begin{align*}
		\begin{pmatrix}
			\theta_{3}(2\tau) \\ \theta_{2}(2\tau)
		\end{pmatrix}
		&= \frac{1}{\det (M) }
		\begin{pmatrix}
			S_{1}(\tau) & -f_{1}(\tau)
			\\
			-S_{0}(\tau) & f_{0}(\tau)
		\end{pmatrix}
		\begin{pmatrix}
			\chi  \\ \eta(\tau)^{3}
		\end{pmatrix}
		= \frac{F^{\phi}(\tau)}{\det (M)}\begin{pmatrix}
			\theta_{3}(2\tau) \\ \theta_{2}(2\tau)
		\end{pmatrix},
	\end{align*}
	from \eqref{eq:hS0f} and \eqref{eq:hS1f}. Thus $F^{\phi}(\tau)=\det (M)$ and the result follows.
\end{proof}
Conversely, for any weight $0$, index $1$, holomorphic weak Jacobi form $\phi$, we may define $F^{\phi}$ by \eqref{eq:Fphi} and then confirm \eqref{eq:phi_f} (using \eqref{eq:theta1sqid}--\eqref{eq:f01chi}), thus showing that $\phi$ has a superconformal character expansion with coefficient function $F^\phi$. This proves (i) of Theorem~\ref{thm:MainTheoremF}.


\section{Two intrinsic mock modular forms}\label{Section:S1S2}
To find an explicit expression for $S_{\ell}$ of \eqref{eq:Sdef} we note that
\begin{align*}
\theta_1 (z; \tau)^{2}\mu (z; \tau) 
&= \im\, \zeta^{\half}\theta_1 (z; \tau) \sum_{n\in \Z} (-1)^n \frac{q^{\half n(n+1)}\zeta^n}{1-q^n\zeta}.
\end{align*}
For $\lvert q^n \zeta \rvert <1$ and $\lvert q^n \zeta^{-1} \rvert <1$, manipulating the series we find
\begin{align*}
 &\sum_{n\in \Z} (-1)^n \frac{q^{\half n(n+1)}\zeta^n}{1-q^n\zeta} 
= \left(\sum_{m,n \geq 0} - \sum_{m,n <0}\right) (-1)^n q^{\half \left(n^{2}+2mn+n\right)} \zeta^{m+n}.
\end{align*}
We may also write 
\begin{align*}
 \im\, \zeta^{\half} \theta_1(z; \tau)  
 = \sum_{k\in \Z} (-1)^k q^{\half \left(k+\half \right)^{2}} \zeta^{-k},
\end{align*}
so that
\begin{align*}
 \theta_1 (z; \tau)^{2} \mu (z; \tau) 
 = \left[ \left(\sum_{m,n \geq 0} - \sum_{m,n <0}\right) (-1)^n q^{\half \left(n^{2}+2mn+n\right)} \zeta^{m+n} \right] \left[ \sum_{k\in \Z} (-1)^k q^{\half \left(k+\half \right)^{2}} \zeta^{-k} \right].
\end{align*}
From \eqref{eq:Sdef} we therefore find
\begin{align}
	\notag
S_\ell (\tau) =&q^{-\frac{1}{4}\ell^{2}} \left(\sum_{m,n \geq 0} - \sum_{m,n <0}\right) (-1)^{m+\ell} q^{\half \left(n^{2}+2mn +n\right)}q^{\half  \left(m+n-\ell +\half  \right)^{2}  }
\\
\label{eq:Ssum}
 =&\left(\sum_{m,n \geq 0} - \sum_{m,n <0}\right) (-1)^{m+\ell} q^{ \half  \left(m +2n-\ell + \half \right)^{2}-\left(n-\half \ell\right)^{2}}.
\end{align} 
One can check that $S_{\ell}(\tau)=S_{\ell+2}(\tau)$  as expected. 

\begin{remark}  \
\begin{enumerate}
 \item[(a)] The initial parts of the $q$-series are given by
\begin{align*}
S_{0}(\tau)=& q^{\frac{1}{8}}\left( 
1-q+2\,q^{2}+q^{3}-2\,q^{5}+q^{6}+2\,q^{9}+q^{10}
+\ldots\right),
\\
S_{1}(\tau)=&2\,q^{-\frac{1}{8}}\left( 
-q+q^{3}-q^{4}-q^{6}+q^{8}-q^{9}+q^{10}
+\ldots\right).
\end{align*}
Below we show that $S_{0},S_{1}$ define a weight $1$ vector-valued mock modular form for $\SL_2(\Z)$. 
 \item[(b)] The functions $S_{0},S_{1}$ occur elsewhere in the literature. For example, $S_{0}=-F_{+}(0)$ and  $S_{1}=-F_{-}(0)$ for $F_{\pm}(z)$ functions of \cite[Display (8.13)]{BFOR-Harmonic} used in the Zwegers' analysis of $\mu(z_1,z_2;\tau)$.
Likewise the relation \eqref{eq:S0S1eta} is that of \cite[Display (8.11)]{BFOR-Harmonic}.
  See Lemma~\ref{lem:SF} below as well.
\end{enumerate}
\end{remark}
We now relate $S_\ell$ to an indefinite theta function for the rank $2$ quadratic form
\begin{align}
\label{eqn:QABdef}
Q(\alpha) :=\half \alpha^T A \alpha,\quad 
\quad A :=\begin{pmatrix} 1 & 2 \\ 2 & 2\end{pmatrix},
\end{align}
with associated bilinear form $B(\alpha ,\beta ):= \alpha^T A \beta$, where $\alpha ,\beta \in \mathbb{Q}^2$ and $T$ denotes the transpose.
\begin{lemma}
	\label{lem:Selltheta}
For $\ell \in\Z$ we have
\[
S_\ell (\tau ) = (-1)^{\ell +1}\frac{ \im}{2}  \sum_{\nu \in a(\ell)+\Z^{2}} \left(\sgn B(\nu ,c_1)-\sgn B(\nu ,c_2)\right) e^{2\pi \im B(\nu ,b)} q^{Q(\nu)},
\] 
where 
\begin{align}
 a(\ell) :=\begin{pmatrix} \half  \\- \half \ell \end{pmatrix}, \quad 
 b :=\begin{pmatrix} \half  \\ 0\end{pmatrix}, \quad 
 c_1  :=\begin{pmatrix} -1 \\ 1\end{pmatrix}, \quad 
  c_2  :=\begin{pmatrix} -2 \\ 1\end{pmatrix}.
  \label{eq:abc_data}
\end{align}
\end{lemma}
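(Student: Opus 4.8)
The plan is to start from the explicit double-sum expression for $S_\ell$ in \eqref{eq:Ssum} and recognize the exponent of $q$ as a quadratic form in the summation variables, then match it against the indefinite theta function on the right-hand side. First I would rewrite the claimed sum in coordinates: for $\nu \in a(\ell) + \Z^2$, write $\nu = \left(\begin{smallmatrix} m + \half \\ n - \half\ell \end{smallmatrix}\right)$ with $(m,n) \in \Z^2$, and compute $Q(\nu) = \half \nu^T A \nu$ using $A = \left(\begin{smallmatrix} 1 & 2 \\ 2 & 2 \end{smallmatrix}\right)$. The goal is to verify that this $Q(\nu)$ matches the exponent $\half(m + 2n - \ell + \half)^2 - (n - \half\ell)^2$ appearing in \eqref{eq:Ssum}. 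This is a routine but essential algebraic identity that pins down why the specific matrix $A$ and the specific vector $a(\ell)$ were chosen.

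Next I would handle the phase and sign factors. The factor $e^{2\pi\im B(\nu,b)}$ with $b = \left(\begin{smallmatrix} \half \\ 0 \end{smallmatrix}\right)$ should reproduce the sign $(-1)^{m+\ell}$ (up to the global $(-1)^{\ell+1}$ prefactor): I would compute $B(\nu, b) = \nu^T A b$ explicitly and check that $e^{2\pi\im B(\nu,b)} = \pm(-1)^{m}$, absorbing any constant phase into the leading factor. The more delicate point is the sign combination $\sgn B(\nu,c_1) - \sgn B(\nu,c_2)$ with $c_1 = \left(\begin{smallmatrix} -1 \\ 1 \end{smallmatrix}\right)$, $c_2 = \left(\begin{smallmatrix} -2 \\ 1 \end{smallmatrix}\right)$. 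I would compute $B(\nu, c_1)$ and $B(\nu, c_2)$ as linear forms in $(m,n)$ and show that, up to the factor $\tfrac{\im}{2}$, the quantity $\half(\sgn B(\nu,c_1) - \sgn B(\nu,c_2))$ equals $+1$ on the octant $m,n \geq 0$, equals $-1$ on the octant $m,n < 0$, and vanishes elsewhere — exactly reproducing the restricted double sum $\sum_{m,n\geq 0} - \sum_{m,n<0}$ in \eqref{eq:Ssum}.

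The main obstacle will be this last matching of the sign cone with the two octants. The two vectors $c_1, c_2$ are chosen so that the indefinite form $Q$ (which has signature $(1,1)$ since $\det A = -2 < 0$) is negative on each $c_i$, so each $\sgn B(\nu, c_i)$ cuts the plane along a line and the difference of the two signs isolates a wedge. I would need to check carefully that the wedge selected by $\sgn B(\nu,c_1) - \sgn B(\nu,c_2)$, after the affine shift by $a(\ell)$ and the change of variables to $(m,n)$, coincides precisely with the union of the two opposite quadrants $\{m,n \geq 0\}$ and $\{m,n < 0\}$, including correct behavior on the boundary lines (where the half-integer shifts in $a(\ell)$ and $b$ conveniently prevent $B(\nu, c_i)$ from vanishing, so the $\sgn$ values are unambiguous). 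Once the region, the quadratic exponent, and the phase all align, the equality of the two expressions for $S_\ell$ is immediate, and the previously noted periodicity $S_\ell = S_{\ell+2}$ provides a useful consistency check on the $\ell$-dependence entering through $a(\ell)$ and the prefactor $(-1)^{\ell+1}$.
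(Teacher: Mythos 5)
Your overall strategy --- writing $\nu=\left(\begin{smallmatrix} m+\half \\ n-\half\ell\end{smallmatrix}\right)$, checking $Q(\nu)=\half\left(m+2n-\ell+\half\right)^2-\left(n-\half\ell\right)^2$, computing $e^{2\pi\im B(\nu,b)}=\im(-1)^{m-\ell}$, and analyzing the sign factors --- is exactly the paper's approach, and the quadratic-form and phase computations go through as you describe. But your key claim in the last paragraph is false: the wedge cut out by $\sgn B(\nu,c_1)-\sgn B(\nu,c_2)$ does \emph{not} coincide with the union of the quadrants $\{m,n\geq 0\}$ and $\{m,n<0\}$. One computes $B(\nu,c_1)=m+\half$ and $B(\nu,c_2)=-2n+\ell$; the first linear form never vanishes, but the second vanishes precisely on the line $n=\half\ell$, so your assertion that the half-integer shifts keep both signs unambiguous is wrong for $c_2$. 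Consequently the sign combination equals $\pm 2$ on the regions $\{m\geq 0,\,n>\half\ell\}$ and $\{m<0,\,n<\half\ell\}$, equals $\pm 1$ on the boundary line $n=\half\ell$, and vanishes elsewhere. This wedge is anchored at $n=\half\ell$, not at $n=0$, and even for $\ell=0$ the half-weight boundary terms prevent it from matching the quadrant decomposition of \eqref{eq:Ssum} on the nose.

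The missing ingredient is the observation that the discrepancy between the true wedge and the two quadrants consists of complete horizontal lines (all $m\in\Z$ with $n$ fixed in the strip $0\leq n\leq \half\ell$, plus the half-weighted boundary), and that each such line contributes zero: the involution $m\mapsto m'=-m-1-4n+2\ell$ preserves $Q(m,n)$ while flipping $(-1)^m$ (the shift $-1-4n+2\ell$ is odd), so $\sum_{m\in\Z}(-1)^m q^{Q(m,n)}=0$ for every fixed $n$. Without this cancellation argument your proof does not close; with it, the correction terms all vanish and the identity follows. You should add this step explicitly rather than asserting the regions agree.
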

\begin{proof}
First note that for $\nu 
=\left(\begin{smallmatrix}
m+ \frac{1}{2}  \\ n- \frac{1}{2} \ell
\end{smallmatrix}\right)\in a(\ell)+\Z^{2}$ we have
\begin{align}
\label{eq:Qnumn}
 Q(m,n): =Q(\nu) = \half  \left(m +2n-\ell + \half \right)^{2}-\left(n-\half \ell\right)^{2}.
\end{align}  
Furthermore, $2Q(c_1)=Q(c_2)=-1$ and $B(c_1,c_2)=-2<0$. Since $B(\nu ,b)=\tfrac{1}{2}(m-\ell)+\tfrac{1}{4}+n$ we have 
 \[
 e^{2\pi \im B(\nu,b)}  = \im\, (-1)^{m-\ell}.
 \]
  Meanwhile, using that $B(\nu ,c_1)=m+\tfrac{1}{2}$ and $B(\mu ,c_2)=-2n+\ell$ we have 
 \[
 \sgn \left(B\left(\nu ,c_1 \right)\right) =\begin{cases} 
 1 & \text{ if }m \geq 0 \\ -1 & \text{ if }m < 0 
 \end{cases}
\quad  \mbox{ and } \quad
 -\sgn \left(B\left(\nu ,c_2 \right)\right) =\begin{cases} 
 1 & \text{ if }n> \half \ell \\ 0 &\text{ if } n=\half \ell \\ -1 & \text{ if }n< \half \ell 
 \end{cases}
 \]
where we may assume that $\ell\ge 0$ without loss of generality.
 Thus,
 \[
 \sgn \left(B\left(\nu ,c_1 \right)\right) -\sgn \left(B\left(\nu ,c_2 \right)\right)
 =\begin{cases}
 \;\;2 &\text{ if } m\geq 0, n>\half \ell
 \\ 
 \;\; 1 &\text{ if } m\geq 0, n=\half \ell
 \\
 \;\; 0 &\text{ if } m\geq 0, n<\half \ell \;\text{ or } m<0 , n>\half \ell
 \\ -1 &\text{ if } m< 0, n=\half \ell
 \\ -2 &\text{ if } m< 0, n<\half \ell .
 \end{cases}
 \]
We find
  \begin{align*}
 &  (-1)^{\ell +1} \frac{ \im}{2} \sum_{\nu \in a+\Z^{2}} \left(\sgn B(\nu ,c_1)-\sgn B(\nu ,c_2)\right) e^{2\pi \im B(\nu ,b)} q^{Q(\nu)}
 \\&= \left(\sum_{\substack{m\geq 0 \\ n>\ell /2}} - \sum_{\substack{m< 0 \\ n<\ell /2}}\right) (-1)^m q^{Q(m ,n)}
 + \half \left(\sum_{m\geq 0} - \sum_{m<0}\right) (-1)^m q^{Q\left(m ,\half \ell\right)}
 \\&= \left(\sum_{\substack{m\geq 0 \\ n\geq 0}} - \sum_{\substack{m< 0 \\ n<0}}\right) (-1)^m q^{Q(m ,n)}
 - \sum_{\substack{m\geq 0 \\ 0\leq n \leq \ell /2}} (-1)^m q^{Q(m ,n)}
 - \sum_{\substack{m<0 \\ 0\leq n < \ell /2}} (-1)^m q^{Q(m ,n)}
 \\ &\qquad + \half \left(\sum_{m\geq 0} - \sum_{m<0}\right) (-1)^m q^{Q\left(m ,\half \ell\right)}
 \\&= S_\ell (\tau)
 - \sum_{\substack{m\geq 0 \\ 0\leq n \leq \ell /2}} (-1)^m q^{Q(m ,n)}
 - \sum_{\substack{m<0 \\ 0\leq n \leq \ell /2}} (-1)^m q^{Q(m ,n)}
 + \half \left(\sum_{m\geq 0} + \sum_{m<0}\right) (-1)^m q^{Q\left(m ,\half \ell\right)}
 \\&= S_\ell (\tau)
 - \sum_{\substack{m\in \Z \\ 0\leq n \leq \ell /2}} (-1)^m q^{Q(m ,n)}
 + \half \sum_{m\in \Z} (-1)^m q^{Q\left(m ,\half \ell\right)}.
 \end{align*}
 Note that from \eqref{eq:Qnumn} it follows that $Q(m',n)=Q(m,n)$ for $ m'=-m-1-4n+2\ell$. Thus for fixed $n$ we find
 \begin{align*}
\sum_{m\in\Z}(-1)^{m} q^{Q(m,n)} =-\sum_{m'\in\Z}(-1)^{m'} q^{Q(m',n)}=0.
 \end{align*}
 Therefore the result holds.
 \end{proof}

Following \cite[(8.13)]{BFOR-Harmonic}, the series $S_{\ell}$ may also be expressed as a nested double sum.
\begin{lemma}
\label{lem:SF}
We have  
\[
S_{\ell}(\tau)=\sum_{s=\ell  \bmod{2}}\;\sum_{r\ge s}(-1)^{r}q^{P(r,s)},
\]
for $P(r,s):=\half (r+\half )^2-\frac{1}{4}s^2$.
\end{lemma}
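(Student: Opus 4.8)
The plan is to start from the explicit expression \eqref{eq:Ssum},
\[
S_\ell(\tau)=\left(\sum_{m,n\ge 0}-\sum_{m,n<0}\right)(-1)^{m+\ell}q^{\half\left(m+2n-\ell+\half\right)^2-\left(n-\half\ell\right)^2},
\]
and to reorganize the lattice sum by a linear change of variables chosen so that the quadratic exponent becomes $P(r,s)$. Setting
\[
r:=m+2n-\ell,\qquad s:=2n-\ell,
\]
one checks this is a bijection from $(m,n)\in\Z^2$ onto the pairs $(r,s)$ with $s\equiv\ell\,(2)$, with inverse $m=r-s$, $n=\half(s+\ell)$. First I would verify the two bookkeeping identities: that the exponent equals $P(r,s)=\half(r+\half)^2-\tfrac14 s^2$ (immediate, since $r+\half=m+2n-\ell+\half$ and $\tfrac14 s^2=(n-\half\ell)^2$), and that $(-1)^{m+\ell}=(-1)^{m+2n-\ell}=(-1)^r$. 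Thus in the new variables the summand is exactly $(-1)^r q^{P(r,s)}$.

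Next I would translate the two summation regions. Under the substitution, $m\ge 0\iff r\ge s$ and $m<0\iff r<s$, while $n\ge 0\iff s\ge -\ell$ and $n<0\iff s<-\ell$. Hence
\[
S_\ell(\tau)=\sum_{\substack{s\equiv\ell\,(2)\\ s\ge-\ell}}\sum_{r\ge s}(-1)^r q^{P(r,s)}-\sum_{\substack{s\equiv\ell\,(2)\\ s<-\ell}}\sum_{r<s}(-1)^r q^{P(r,s)}.
\]
The target of the lemma is $\sum_{s\equiv\ell\,(2)}\sum_{r\ge s}(-1)^r q^{P(r,s)}$, which differs from the first double sum above only by the missing block $\{s<-\ell,\ r\ge s\}$. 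So it remains to show that subtracting the second sum supplies exactly this missing block.

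The load-bearing step is a reflection identity: for each fixed $s$ the involution $r\mapsto -r-1$ fixes $P(r,s)$ (since $(r+\half)^2$ is invariant under $r+\half\mapsto -(r+\half)$) and negates $(-1)^r$, so that $\sum_{r\in\Z}(-1)^r q^{P(r,s)}=0$; this is the exact analogue of the vanishing already exploited in the proof of Lemma~\ref{lem:Selltheta}. Applying it for every $s<-\ell$ rewrites the subtracted block $\sum_{s<-\ell}\sum_{r<s}(-1)^r q^{P(r,s)}$ as $-\sum_{s<-\ell}\sum_{r\ge s}(-1)^r q^{P(r,s)}$; its negative is then precisely the block $\{s<-\ell,\ r\ge s\}$ absent from the first double sum, and adding it back extends the range of $s$ to all of $\{s\equiv\ell\,(2)\}$, giving the claimed nested sum. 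I expect the main obstacle to be purely the region bookkeeping — correctly tracking the boundary case $n=0$ (i.e.\ $s=-\ell$, which lies in the region $m,n\ge 0$) and confirming that $s<-\ell$ is a full union of residue classes mod $2$ so that the reflection sums telescope cleanly — since beyond the reflection symmetry no new analytic input is needed.
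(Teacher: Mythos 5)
Your proof is correct and follows essentially the same route as the paper: the same change of variables $r=m+2n-\ell$, $s=2n-\ell$, the same region translation, and the same reflection $r\mapsto -r-1$ to convert the $\{r<s\}$ block into the missing $\{s<-\ell,\ r\ge s\}$ block (the paper phrases the reflection as a shift of the lower limit from $-s$ to $s$ using the vanishing of the middle range, whereas you invoke the vanishing of the full sum over $r\in\Z$, which is the identical symmetry).
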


\begin{proof}
Let $s=2n-\ell$ and $r=m+s=m+2n-\ell$  in \eqref{eq:Ssum}. The $q$ exponent \eqref{eq:Qnumn} is $Q(m,n)=P(r,s)$ and $(-1)^{m+\ell}=(-1)^{r}$.  Furthermore, we have $m,n\ge 0$ or $m,n< 0$, if and only if, $r\ge s\ge -\ell$ or $r<s< -\ell$, respectively. Thus,
 the first sum of \eqref{eq:Ssum} gives
\[
\sum_{m,n \geq 0}  (-1)^{m+\ell}q^{Q(m,n)}=\sum_{s\ge -\ell}\,\sum_{r\ge s}(-1)^{r}q^{P(r,s)},
\] 
and the second sum gives
\begin{align*}
-\sum_{m,n < 0}  (-1)^{m+\ell}q^{Q(m,n)}&=\sum_{s<-\ell}\,\sum_{r< s}(-1)^{r+1}q^{P(r,s)}
=\sum_{s<-\ell}\,\sum_{r'\ge -s}(-1)^{r'}q^{P(r',s)},
\end{align*} 
where $r'=-r-1$ and using $P(r',s)=P(r,s)$. But $\sum_{r'= -s}^{s-1}(-1)^{r'}q^{P(r',s)}=0$ for $s<0$ since this sum is odd under the $r\rightarrow r'$ map. Thus 
\begin{align*}
-\sum_{m,n < 0}  (-1)^{m+\ell}q^{Q(m,n)}&=\sum_{s<-\ell}\,\sum_{r'\ge s }(-1)^{r'}q^{P(r',s)},
\end{align*}
and the result follows from  \eqref{eq:Ssum}. 
\end{proof}

Consider the rank $2$ indefinite theta function defined in \cite[Definition 2.1]{Zwegers-Thesis} for $Q$ and $B$ of \eqref{eqn:QABdef} and 
   $ a(\ell),b,c_{1},c_{2}$ of \eqref{eq:abc_data}.
Since $Q(\nu)\neq 0$ for $\nu\in\Z^{2}$  we find in this case that\footnote{This condition  is sufficient to ensure that $S_{Q}=\{\}$ (cf.\ \cite[Page 26]{Zwegers-Thesis}) leading to $\rho_{c_1,c_2}(\nu)$ as given.} 
\begin{align}
\theta_{a(\ell),b}^{c_{1},c_{2}}(\tau):=\sum_{\nu \in a(\ell)+\Z^{2}} \rho_{c_1,c_2}(\nu) e^{2\pi \im B(\nu ,b)} q^{Q(\nu)},
\notag
\end{align}	
where 
\begin{align*}
\rho_{c_1,c_2}(\nu) :=& \sgn B(\nu ,c_1)\left(1- \beta \left( -\tfrac{B\left(\nu, c_1\right)^{2}}{Q\left(c_1\right)}y\right)\right) 
-\sgn B(\nu ,c_{2})\left(1- \beta \left( -\tfrac{B\left(\nu, c_{2}\right)^{2}}{Q\left(c_{2}\right)}y\right)\right), 
\end{align*}
with $y=\operatorname{im}(\tau)$ and 	
where $\beta(x)=\int_{x}^{\infty}u^{-\half}e^{-\pi u}\, du$ for real $x\ge 0$ as before.
We define
\begin{align}
\notag
\vartheta^{\ell}(\tau):=\theta_{a(\ell),b}^{c_{1},c_{2}}(\tau).
\end{align}
\begin{corollary}
	\label{cor:thetaell}
 We have
 \begin{align*}
 \vartheta^{\ell}(\tau)&= 2 \im (-1)^{\ell } S_{\ell}(\tau) 
  + \sum_{j=1}^{2} (-1)^{j}  \sum_{\nu \in a(\ell)+\Z^{2}} 
  \sgn \left(B\left(\nu ,c_j\right)\right) \beta \left( -\tfrac{B\left(\nu, c_j\right)^{2}}{Q\left(c_j\right)}y\right) e^{2\pi \im B(\nu ,b)} q^{Q(\nu)}. 
 \end{align*}
 \hfill $\Box$
\end{corollary}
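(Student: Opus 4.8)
The plan is to read the statement off directly from the definition of $\vartheta^{\ell}(\tau)=\theta_{a(\ell),b}^{c_1,c_2}(\tau)$, since the weight $\rho_{c_1,c_2}(\nu)$ is visibly the sum of a ``pure sign'' contribution and a $\beta$-correction. First I would expand
\[
\rho_{c_1,c_2}(\nu)=\bigl(\sgn B(\nu,c_1)-\sgn B(\nu,c_2)\bigr)-\Bigl(\sgn B(\nu,c_1)\,\beta\bigl(-\tfrac{B(\nu,c_1)^2}{Q(c_1)}y\bigr)-\sgn B(\nu,c_2)\,\beta\bigl(-\tfrac{B(\nu,c_2)^2}{Q(c_2)}y\bigr)\Bigr),
\]
and split the defining series for $\vartheta^\ell$ into the corresponding two sums. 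This splitting is legitimate because $\vartheta^\ell$ is already a well-defined (absolutely convergent) object in the sense of Zwegers' Definition~2.1, so the two partial sums converge separately.

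For the first piece, I would invoke Lemma~\ref{lem:Selltheta}, which states precisely that
\[
\sum_{\nu\in a(\ell)+\Z^2}\bigl(\sgn B(\nu,c_1)-\sgn B(\nu,c_2)\bigr)e^{2\pi\im B(\nu,b)}q^{Q(\nu)}=\frac{2}{(-1)^{\ell+1}\im}\,S_\ell(\tau).
\]
Simplifying the prefactor using $\im^{-1}=-\im$ together with $(-1)^{\ell+1}=-(-1)^{\ell}$ turns $\tfrac{2}{(-1)^{\ell+1}\im}$ into $2\im(-1)^\ell$, which is exactly the claimed coefficient of $S_\ell(\tau)$.

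The second piece is then merely reindexed: after pulling out the overall minus sign, the bracket $-\bigl(\sgn B(\nu,c_1)\beta(\cdots)-\sgn B(\nu,c_2)\beta(\cdots)\bigr)$ coincides with $\sum_{j=1}^2(-1)^j\sgn B(\nu,c_j)\,\beta\bigl(-\tfrac{B(\nu,c_j)^2}{Q(c_j)}y\bigr)$, since $(-1)^1=-1$ and $(-1)^2=+1$. Recombining the two pieces yields the displayed identity. There is no genuine obstacle here---the statement is a bookkeeping corollary of Lemma~\ref{lem:Selltheta}---and the only point demanding care is correctly tracking the factor of $\im$ and the sign $(-1)^{\ell+1}$ when inverting the constant supplied by the lemma.
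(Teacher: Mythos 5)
Your proposal is correct and is exactly the argument the paper intends: the corollary is stated with an immediate $\Box$ because it follows by expanding $\rho_{c_1,c_2}(\nu)$ into its sign part and its $\beta$-part, identifying the sign part with $S_\ell$ via Lemma~\ref{lem:Selltheta}, and tracking the constant $2\im(-1)^{\ell}$ as you do. Your bookkeeping of the factor $\tfrac{2}{(-1)^{\ell+1}\im}=2\im(-1)^{\ell}$ and of the signs $(-1)^{j}$ is accurate.
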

 The indefinite (and non-holomorphic) theta function $\vartheta^{\ell}$ satisfies the following remarkably simple modular properties.
 
\begin{lemma}
	\label{lem:thetaellmod}
	We have
\begin{align}
\vartheta^{\ell}(\tau+1) &= e^{ \im\pi\left(\frac{1}{4}-\frac{1}{2} \ell^{2} \right)}\vartheta^{\ell}(\tau),
\label{eq:Ttheta_ell}
\\
\vartheta^{\ell}\left(-\frac{1}{\tau}\right) &= -\frac{ \im\tau }{\sqrt{2}}
\sum_{\ell'=0,1\bmod 2}(-1)^{\ell\ell'}\vartheta^{\ell'}(\tau).
\label{eq:Stheta_ell}
\end{align}
\end{lemma}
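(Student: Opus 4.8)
The plan is to treat the two transformations separately: the action of $T$ is a direct computation on the lattice sum defining $\vartheta^{\ell}$, while the action of $S$ is obtained by specializing Zwegers' general modular transformation formula for completed indefinite theta functions \cite{Zwegers-Thesis} to the data $a(\ell),b,c_1,c_2$ of \eqref{eq:abc_data}.

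For \eqref{eq:Ttheta_ell} I would argue as follows. Under $\tau\mapsto\tau+1$ the imaginary part $y$ is unchanged, so every factor $\rho_{c_1,c_2}(\nu)$ and every phase $e^{2\pi\im B(\nu,b)}$ appearing in $\vartheta^{\ell}$ is fixed, and only $q^{Q(\nu)}$ acquires the extra factor $e^{2\pi\im Q(\nu)}$. It therefore suffices to show this phase is constant on the coset $a(\ell)+\Z^2$. Writing $\nu=a(\ell)+n$ with $n\in\Z^2$ and expanding $Q(\nu)=Q(a(\ell))+B(a(\ell),n)+Q(n)$ for the form $A$ of \eqref{eqn:QABdef}, one checks that $B(a(\ell),n)+Q(n)\equiv\half n_1(n_1+1)\equiv 0\pmod 1$, so that $e^{2\pi\im Q(\nu)}=e^{2\pi\im Q(a(\ell))}$ is independent of $n$. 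A short computation gives $Q(a(\ell))=\tfrac18-\tfrac{\ell}{2}+\tfrac{\ell^2}{4}$, and since $\ell(\ell-1)$ is even this collapses to $e^{2\pi\im Q(a(\ell))}=e^{\im\pi(\frac14-\frac12\ell^2)}$, which is exactly \eqref{eq:Ttheta_ell}.

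For \eqref{eq:Stheta_ell} I would invoke Zwegers' transformation of $\theta_{a,b}^{c_1,c_2}$ under $\tau\mapsto-1/\tau$, which keeps the fixed vectors $c_1,c_2$ intact. Here one first records that these satisfy his admissibility hypotheses, namely $Q(c_1),Q(c_2)<0$ and $B(c_1,c_2)<0$, as already noted in the proof of Lemma~\ref{lem:Selltheta}. The formula produces the weight-one prefactor $-\im\tau/\sqrt{\abs{\det A}}=-\im\tau/\sqrt{2}$ together with a finite sum over the discriminant group $A^{-1}\Z^2/\Z^2$, whose order is $\abs{\det A}=2$; the two resulting summands are naturally indexed by $\ell'\in\{0,1\}$. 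These two terms must then be identified with $\vartheta^0$ and $\vartheta^1$ by matching the $S$-transformed characteristics (which interchange the roles of $a(\ell)$ and $b$) against the family $\{(a(\ell'),b)\}$, and the accompanying Gauss-sum phases must be computed to be $(-1)^{\ell\ell'}$, yielding $\vartheta^\ell(-1/\tau)=-\frac{\im\tau}{\sqrt2}\bigl(\vartheta^0(\tau)+(-1)^\ell\vartheta^1(\tau)\bigr)$.

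I expect the $T$-part to be routine, and the identification of the discriminant-group data (order $2$, hence two summands matching $\ell'\in\{0,1\}$) to be immediate. The main obstacle is the bookkeeping in the $S$-step: correctly tracking how the characteristic pair $(a(\ell),b)$ is interchanged and shifted under $S$, reorganizing the dual-lattice sum over the order-two discriminant group into the two functions $\vartheta^{\ell'}$, and verifying that the Weil-representation phases come out precisely as $(-1)^{\ell\ell'}$ with no residual root-of-unity factor — a consistency demand reflected in the clean prefactor $-\im\tau/\sqrt2$. As an independent cross-check one could instead derive both transformations from Corollary~\ref{cor:thetaell} together with the known modular behavior of $\widehat{\mu}$, $\eta^3$, and the $\theta_{1,\mu}$ of \eqref{eq:EZtheta}, but I would regard the direct application of Zwegers' formula as the cleaner route given the present setup.
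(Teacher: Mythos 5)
Your proposal is correct and, for the $S$-transformation, follows the same route as the paper: both invoke Zwegers' Corollary 2.9 for $\theta_{a,b}^{c_1,c_2}(-1/\tau)$, identify the sum over $A^{-1}\Z^2/\Z^2$ (order $\lvert\det A\rvert=2$) with the two characteristics $b+p=(\tfrac12,-\tfrac12\ell')$ for $\ell'\in\{0,1\}$, and extract the phases. Where you differ is the $T$-transformation: the paper applies Corollary 2.9 again, obtaining $\theta_{a,a+b+\frac12 A^{-1}A^*}^{c_1,c_2}$ and then a second characteristic-shift identity to return to $\theta_{a,b}^{c_1,c_2}$, whereas you observe directly that $\rho_{c_1,c_2}(\nu)$ and $e^{2\pi\im B(\nu,b)}$ are insensitive to $\tau\mapsto\tau+1$ and that $e^{2\pi\im Q(\nu)}$ is constant on the coset $a(\ell)+\Z^2$ because $B(a(\ell),n)+Q(n)\equiv\tfrac12 n_1(n_1+1)\pmod 1$; your value $Q(a(\ell))=\tfrac18-\tfrac{\ell}{2}+\tfrac{\ell^2}{4}$ agrees with the paper's and yields the stated root of unity. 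Your $T$-argument is more elementary and self-contained, at the cost of not reusing the general machinery. One caveat: in the $S$-step you defer the decisive phase computation, merely asserting that the ``Gauss-sum phases come out as $(-1)^{\ell\ell'}$.'' To close this you must write $-a(\ell)=b+\mu$ with $\mu=(-1,\tfrac{\ell}{2})\in A^{-1}\Z^2$ and apply the quasi-periodicity $\theta_{a,b+\mu}^{c_1,c_2}=e^{2\pi\im B(a,\mu)}\theta_{a,b}^{c_1,c_2}$ to get $\theta_{b+p,-a}^{c_1,c_2}=(-1)^{\ell\ell'+\ell+1}\vartheta^{\ell'}$, which combined with $e^{2\pi\im B(a,b)}=\im(-1)^{\ell}$ and the factor $\tau/\sqrt{2}$ produces exactly $-\tfrac{\im\tau}{\sqrt2}(-1)^{\ell\ell'}$; this is precisely the bookkeeping the paper carries out, and it does come out as you predicted.
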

\begin{proof}
Applying \cite[Corollary 2.9]{Zwegers-Thesis} we obtain
\[
\theta_{a,b}^{c_{1},c_{2}}(\tau+1)=
e^{-2\pi \im Q(a)-\pi \im B\left(A^{-1}A^{*},a\right)}
\theta_{a,a+b+\frac{1}{2} A^{-1}A^{*}}^{c_{1},c_{2}}(\tau),
\]
where $A^{*}=(1,2)$ is the vector of diagonal elements of $A$, and 
\begin{align*}
Q(a)=\frac{1}{4}\ell^{2}-\frac{1}{2} \ell+\frac{1}{8},\quad
B\left(A^{-1}A^{*},a\right)=-\ell+\frac{1}{2} ,\quad 
a+b+\frac{1}{2} A^{-1}A^{*}=\left( \frac{3}{2},-\frac{1}{2} \ell\right).
\end{align*}
Thus,
\[
e^{-2\pi \im Q(a)-\pi \im B\left(A^{-1}A^{*},a\right)}=-e^{ \im\pi\left(\frac{1}{4}-\frac{1}{2} \ell^{2} \right)}.
\]
Furthermore, \cite[Corollary 2.9]{Zwegers-Thesis} states that in general
\begin{align}
\theta_{a,b+\mu}^{c_{1},c_{2}}(\tau)=
e^{2\pi \im B(a,\mu)}\theta_{a,b}^{c_{1},c_{2}}(\tau),\quad \mu\in A^{-1}\Z^{2},
\notag
\end{align}
from which it follows in our case that $\theta_{a,a+b+\half A^{-1}A^{*}}^{c_{1},c_{2}}(\tau)=-\theta_{a,b}^{c_{1},c_{2}}(\tau)$. 
Thus \eqref{eq:Ttheta_ell} follows. Finally,  using \cite[Corollary 2.9]{Zwegers-Thesis} again we have in general that
 \[
 \theta_{a,b}^{c_{1},c_{2}}\left(-\frac{1}{\tau}\right)
 =
 \frac{\tau}{\sqrt{-\det A}}e^{2\pi \im B(a,b)}
 \sum_{p\in A^{-1}\Z^{2}\bmod \Z^{2}}\theta_{b+p,-a}^{c_{1},c_{2}}(\tau).
  \]
 In our case we find that $e^{2\pi \im B(a,b)}= \im (-1)^{\ell}$, $b+p=(\half ,-\half \ell')\bmod \Z^{2}$ for $\ell'=0,1$ and that 
 $\theta_{b+p,-a}^{c_{1},c_{2}}(\tau)
 =(-1)^{\ell\ell' +\ell+1}\theta_{a,b}^{c_{1},c_{2}}(\tau)$. This establishes \eqref{eq:Stheta_ell}.
\end{proof}

Thus  $S_{\ell}$ is given by the holomorphic part of the indefinite theta function $\vartheta^{\ell}$ and, as such, $(S_{0},S_{1})$ is a vector-valued mock modular form of weight $1$ for $\SL_2(\Z)$ (cf.\ \cite[Theorem 8.30]{BFOR-Harmonic}). Furthermore, since $\vartheta^{\ell}(\tau+4)=-\vartheta^{\ell}(\tau)$, it follows that $S_{\ell}$  is a mock modular form of weight $1$ for $\Gamma_{0}(4)$ with a  multiplier system. At this point, we have proved Proposition \ref{prop:MainProp}.

Alternatively to the explicit indefinite theta series completion shown above, we can use Zwegers' analysis of the more general Appell-Lerch sum $\mu (z_1,z_2;\tau)$ function as described in \cite{EguchiHikami-Mock}. This also allows us to find the completions for the functions $S_\ell$. 
 
Following the discussion surrounding \eqref{eqn:Bigmu} we have that $\theta_1(z; \tau)^2\mu (z; \tau)$ is a weight $\frac{3}{2}$ and index $1$ mock Jacobi form for $\Gamma^{J}$ with completion
\begin{align}
\notag
\theta_1 (z; \tau)^{2} \widehat{\mu }(z; \tau)
=\widehat{S}_{0}(\tau)\theta_{3}(2z; 2\tau)+\widehat{S}_{1}(\tau)\theta_{2}(2z; 2\tau),
\end{align}
using \eqref{eq:theta1_mu}, 
where $\widehat{S}_{\ell}$ is the completion of $S_{\ell}$ given by
\begin{align}
\label{eq:Shat1}
\widehat{S}_{0}(\tau)&:={S}_{0}(\tau)-  \theta_{2}(2\tau)C(\tau),
\\
\label{eq:Shat2}
\widehat{S}_{1}(\tau)&:={S}_{1}(\tau)+  \theta_{3}(2\tau)C(\tau).
\end{align}
It can be shown that the completion terms $-\theta_{2}(2\tau)C(\tau),  \theta_{3}(2\tau)C(\tau)$ agree with those of Corollary~\ref{cor:thetaell}. In the next section we find the functions $S_\ell$ and $C$ play roles in describing  $F^{\phi}$ of \eqref{eqn:Fdefn}.


\section{The mock modular form $F^{\phi}(\tau)$}\label{Section:FunctionF}

Define  column and row vectors $\Theta $ and  $H$ by
\begin{align*}
\Theta (z; \tau):=\begin{pmatrix}
\theta_{3}(2z; 2\tau)
\\
\theta_{2}(2z; 2\tau)
\end{pmatrix},
\quad 
H(\tau):=\left(f_{0}(\tau),\, f_{1}(\tau)\right),
\end{align*}
for $f_0,f_1$ of \eqref{eqn:EZdecomp}.
Then $\Theta $ determines a vector-valued Jacobi form of weight $\half$ and index $1$ for $\Gamma^{J}$ 
with a multiplier matrix\footnote{ 
Due to the $(c\tau+d)^{\frac{1}{2}}$ factor, $\rho_{\Theta}$ is a representation of the metaplectic double cover of $\SL_2(\Z)$.}
 $\rho_{\Theta}$. Indeed,  
\begin{align*}
\Theta (z; \tau+\lambda \tau+ \mu)&=q^{-\lambda^{2}}\zeta^{-2\lambda}\Theta (z; \tau),\quad \lambda,\mu \in \Z,
\\
\Theta \left(\frac{z}{c\tau+d}; \frac{a\tau+b}{c\tau+d}\right)
&=(c\tau+d)^{\half}e^{2\pi \im \frac{cz^{2}}{c\tau +d}}\rho_{\Theta}(\gamma)\Theta (z; \tau),\quad
\gamma=\begin{pmatrix}
a&b\\c&d
\end{pmatrix}\in \SL_2(\Z),
\end{align*}
where for $\SL_2(\Z)$ generators $T=\left(\begin{smallmatrix} 1&1\\0&1 \end{smallmatrix}\right)$ and $S=\left(\begin{smallmatrix} 0&-1\\1&0 \end{smallmatrix}\right)$ we have 
\begin{align}
\label{eq:rhotheta}
\rho_{\Theta}(T)=\begin{pmatrix}
1 & 0 \\ 0 & \im
\end{pmatrix},\quad 
\rho_{\Theta}(S)=\sqrt{\frac{- \im}{2}}\begin{pmatrix}
1 & 1 \\ 1 & -1
\end{pmatrix}.
\end{align}
This implies the $\Theta$ components are weight $\half$, index $1$, Jacobi forms for $\Gamma_{0}(4)^J$ (with $\Gamma_0(4)$ generated by $T$ and $ST^{4}S$)  with a multiplier system consisting of fourth roots of unity. 

 Given that $ \phi(z; \tau)=H(\tau)^{T}\Theta (z; \tau)$ is modular of weight $0$ for some level $M$ modular group $\Gamma_{\phi}\le \SL_2(\Z)$, it follows that $H$ determines a meromorphic vector-valued modular form of weight $-\half $ for $\Gamma_{\phi}$ with
\begin{align*}
H\left(\frac{a\tau+b}{c\tau+d}\right)
&=(c\tau+d)^{-\half}H(\tau)\rho_{\Theta}(\gamma)^{-1},\quad
\gamma=\begin{pmatrix}
a&b\\c&d
\end{pmatrix}\in\Gamma_{\phi}.
\end{align*} 
Define the modular group of level $\operatorname{lcm}(4,M)$ by
\begin{align}
\notag
\Gamma^{\phi}_{0}(4):= \Gamma_{\phi} \cap \Gamma_{0}(4)
=\left\{\begin{pmatrix}
a&b\\c&d
\end{pmatrix}\in \Gamma_{\phi} : c=0\bmod 4 \right\}.
\end{align}
It follows that $f_{0}(\tau)\theta_{3}(2\tau)$ and $f_{1}(\tau)\theta_{2}(2\tau)$ are modular invariant under 
$\Gamma^{\phi}_{0}(4)$ and are therefore either both constant (using \eqref{eq:f01chi}) or meromorphic. Thus $f_{0}, f_{1}$ are meromorphic modular forms with multipliers of weight $-\half $ for $\Gamma^{\phi}_{0}(4)$.

 Define the ``$R$-shift'' of $\phi$ (the $R$ relates to a notion of a Ramond character we define later) by
\begin{align}
\label{eq:Rchar}
R(\tau)=R^\phi (\tau) :=\phi\left(\half ;\tau  \right)
=f_{0}(\tau)\theta_{3}(2\tau)-f_{1} (\tau)\theta_{2}(2\tau),
\end{align}
which is $\Gamma^{\phi}_{0}(4)$ modular invariant. 
We may explicitly determine  $f_{0},f_{1}$  from   \eqref{eq:f01chi} and \eqref{eq:Rchar}. Altogether we have found the following. 

\begin{proposition}
	\label{prop:EZcoeff}
The theta coefficient functions $f_{0}, f_{1}$ for $\phi$ are given by
\begin{align}
\label{eq:hr}
f_{0}(\tau)=\frac{\chi+R(\tau)}{2\theta_{3}(2\tau)},\quad 
f_{1}(\tau)=\frac{\chi-R(\tau)}{2\theta_{2}(2\tau)},
\end{align}
for a constant $\chi = \phi (0; \tau)$ and where $R(\tau)=\phi\left(\half  ;\tau \right)$ is a modular invariant for $\Gamma^{\phi}_{0}(4)$ with singularities at rational cusps of $\Gamma^{\phi}_{0}(4)$.  The pair 
$(f_{0}, f_{1})$ determines a vector-valued modular form of weight $-\half$ for $\Gamma_{\phi}$ with multiplier matrix $\rho_{\Theta}^{-1}$ of \eqref{eq:rhotheta}. Moreover, the individual functions 
$f_{0}, f_{1}$ are meromorphic modular forms of weight $-\half $ with multipliers for $\Gamma^{\phi}_{0}(4)$, where $f_{0}(\tau)$ is singular at the singular cusps of $R(\tau)$ and the zeros of $\theta_{3}(2\tau)$, and $f_{1}(\tau)$ is singular at the singular cusps of $R(\tau)$ and the zeros of $\theta_{2}(2\tau)$. \hfill $\Box$
\end{proposition}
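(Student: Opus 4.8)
The plan is to read off \eqref{eq:hr} as the unique solution of a $2\times 2$ linear system and then to collect the modularity and singularity assertions from transformation laws already established for $\phi$, $\Theta$, and the theta-constants. First I would specialize the theta decomposition \eqref{eqn:EZdecomp} at the two torsion points $z=0$ and $z=\half$. Setting $z=0$ gives $\theta_3(0;2\tau)=\theta_3(2\tau)$ and $\theta_2(0;2\tau)=\theta_2(2\tau)$, hence \eqref{eq:f01chi}, since $\phi(0;\tau)=\chi$ is constant. Setting $z=\half$ and invoking the spectral-flow relations \eqref{eq:EZthetaflow} together with \eqref{eqn:ThetaRelation}---which give that $\theta_3(2z;2\tau)$ and $\theta_2(2z;2\tau)$ specialize at $z=\half$ to $\theta_3(2\tau)$ and $-\theta_2(2\tau)$ respectively---produces \eqref{eq:Rchar}. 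Viewing \eqref{eq:f01chi} and \eqref{eq:Rchar} as linear equations in the unknowns $f_0(\tau)\theta_3(2\tau)$ and $f_1(\tau)\theta_2(2\tau)$, adding and subtracting immediately yields $2f_0\theta_3(2\tau)=\chi+R$ and $2f_1\theta_2(2\tau)=\chi-R$, which is exactly \eqref{eq:hr}.

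Next I would establish that $R(\tau)=\phi(\half;\tau)$ is invariant under $\Gamma^{\phi}_0(4)$. For $\gamma=\left(\begin{smallmatrix}a&b\\c&d\end{smallmatrix}\right)\in\Gamma^{\phi}_0(4)$ I would apply the index-one Jacobi transformation law of $\phi$ with the argument chosen so that $z/(c\tau+d)=\half$, i.e.\ $z=\half(c\tau+d)=\tfrac{c}{2}\tau+\tfrac{d}{2}$. Because $c\equiv 0\bmod 4$, the coefficient $\tfrac{c}{2}$ is an even integer and $d$ is odd, so $z$ differs from $\half$ by an element of $\Z\tau+\Z$; reducing via the elliptic transformation of $\phi$ then returns $R(\tau)$ up to two exponential phases. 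The crux is that the quadratic automorphy factor $e^{2\pi\im cz^2/(c\tau+d)}$ and the phase coming from the lattice shift combine to $e^{2\pi\im c(d-2)/4}$, which is trivial precisely because $4\mid c$. This congruence is exactly what forces the multiplier to vanish, giving $R(\gamma\tau)=R(\tau)$.

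Finally I would assemble the modularity and singularity statements. The weight $-\half$ vector-valued transformation of $(f_0,f_1)$ under $\Gamma_\phi$ with multiplier $\rho_\Theta^{-1}$ is already forced by the identity $\phi=H^{T}\Theta$ together with the transformation \eqref{eq:rhotheta} of $\Theta$; restricting to $\Gamma^{\phi}_0(4)$ diagonalizes $\rho_\Theta$, so that $f_0$ and $f_1$ transform individually with a multiplier. For the singularities I would observe that $R$ is holomorphic on $\HH$, since $\phi$ is holomorphic in both variables, whence its only singularities lie at the cusps, where the weak (rather than strong) Jacobi growth condition permits poles; these are the singular cusps of $R$. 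As $\theta_3(2\tau)$ and $\theta_2(2\tau)$ are nonvanishing on $\HH$ and vanish only at certain cusps, dividing by them in \eqref{eq:hr} introduces poles exactly at those cuspidal zeros, while the singular cusps of $R$ are inherited by both $f_0$ and $f_1$, yielding the stated loci. I expect the main obstacle to be the phase bookkeeping in the second paragraph: verifying that the index-one quadratic exponential and the lattice-translation phase cancel to a trivial multiplier exactly under $c\equiv 0\bmod 4$, which is the delicate point that pins down the group $\Gamma^{\phi}_0(4)$.
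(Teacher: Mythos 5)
Your proposal is correct and follows essentially the same route as the paper: formula \eqref{eq:hr} is obtained by solving the $2\times 2$ linear system given by the specializations \eqref{eq:f01chi} at $z=0$ and \eqref{eq:Rchar} at $z=\half$, and the modularity statements all flow from $\phi=H^{T}\Theta$ and the multiplier $\rho_{\Theta}$. The only (harmless) variation is that you verify the $\Gamma^{\phi}_{0}(4)$-invariance of $R$ by a direct phase computation in the index-one transformation law at $z=\half(c\tau+d)$ --- which checks out, the combined phase being $e^{\pi\im cd/2}=1$ exactly when $4\mid c$ --- whereas the paper deduces it from the fact that $\rho_{\Theta}$ is diagonal on $\Gamma_{0}(4)$, so that $f_{0}\theta_{3}(2\tau)$ and $f_{1}\theta_{2}(2\tau)$ are individually invariant.
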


Altogether, from  the $\Gamma_{\phi}$ invariance of \eqref{eq:phi_f}, the discussion surrounding $C$ defined in \eqref{eqn:completionC}, and Proposition~\ref{prop:Fhr}, we  obtain the following.\footnote{We note that completion also follows from \eqref{eq:Shat1} and \eqref{eq:Shat2} together with \eqref{eq:f01chi}}

\begin{proposition}
\label{prop:hatFhr}
The series $F^{\phi}(\tau)$ is a mock modular form of weight $\half $ for some $\Gamma_{\phi}$ (with a multiplier system) with a universal completion 
\[
\widehat{F}^{\phi}(\tau)=F^{\phi}(\tau)+\chi C(\tau),
\]
where $\chi=\phi(0; \tau)$ is a constant.  \hfill $\Box$
\end{proposition}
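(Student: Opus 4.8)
The plan is to establish the two assertions of Proposition~\ref{prop:hatFhr} separately: first the completion identity $\widehat{F}^{\phi}=F^{\phi}+\chi C$, and then the mock modularity of weight $\tfrac{1}{2}$ for $\Gamma_{\phi}$. The key input is the factorization \eqref{eq:Fphi} of Proposition~\ref{prop:Fhr}, namely $F^{\phi}=f_{0}S_{1}-f_{1}S_{0}$, which splits the problem into the known behavior of the $f_{r}$ (governed by Proposition~\ref{prop:EZcoeff}) and the known behavior of the $S_{\ell}$ (governed by Proposition~\ref{prop:MainProp} and the completions \eqref{eq:Shat1}--\eqref{eq:Shat2}).

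First I would derive the completion. The natural move is to replace each $S_{\ell}$ in \eqref{eq:Fphi} by its completion $\widehat{S}_{\ell}$ and compute the resulting corrected object. Using \eqref{eq:Shat1} and \eqref{eq:Shat2},
\begin{align*}
f_{0}\widehat{S}_{1}-f_{1}\widehat{S}_{0}
&=\bigl(f_{0}S_{1}-f_{1}S_{0}\bigr)+C\bigl(f_{0}\theta_{3}(2\tau)+f_{1}\theta_{2}(2\tau)\bigr)
=F^{\phi}+\chi C,
\end{align*}
where the last equality uses precisely the identity \eqref{eq:f01chi}, namely $f_{0}\theta_{3}(2\tau)+f_{1}\theta_{2}(2\tau)=\chi$. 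This is exactly the content of the footnote following Proposition~\ref{prop:EZcoeff}, so defining $\widehat{F}^{\phi}:=f_{0}\widehat{S}_{1}-f_{1}\widehat{S}_{0}$ yields the stated universal completion, and the term $\chi C$ is the promised non-holomorphic correction since $C$ is the non-holomorphic piece attached to the $\mu$-function.

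Next I would verify that this $\widehat{F}^{\phi}$ is a genuine (harmonic, non-holomorphic) modular form of weight $\tfrac{1}{2}$ for $\Gamma_{\phi}$, which is what licenses calling $F^{\phi}$ a mock modular form. The cleanest route is to return to the $\Gamma_{\phi}$-invariant identity \eqref{eq:phi_f} and replace $\mu$ by its completion $\widehat{\mu}$: since $\widehat{\mu}=\mu-C$ and $\phi$ is $\Gamma_{\phi}$-invariant of weight $0$, the combination $\eta^{3}\phi-\chi\,\theta_{1}^{2}\widehat{\mu}=(F^{\phi}+\chi C)\theta_{1}^{2}=\widehat{F}^{\phi}\theta_{1}^{2}$ inherits modularity from the now-genuinely-modular objects $\phi$, $\eta^{3}$, $\theta_{1}^{2}$, and $\widehat{\mu}$ (whose transformation laws are recorded just below \eqref{eqn:completionC}). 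Dividing by $\theta_{1}^{2}$ and tracking the weights ($0$ for $\phi$, against $3\cdot\tfrac{1}{2}=\tfrac{3}{2}$ from $\eta^{3}$ and $2\cdot\tfrac{1}{2}=1$ from $\theta_{1}^{2}$, together with the weight $\tfrac{1}{2}$ of $\widehat{\mu}$) shows $\widehat{F}^{\phi}$ transforms with weight $\tfrac{1}{2}$, picking up a multiplier system from $\eta$, $\theta_{1}$, and $\widehat{\mu}$. I would note that the $\zeta$-dependence cancels because both sides of \eqref{eq:phi_f} are honest functions of $\tau$ alone after extracting the common $\theta_{1}^{2}$ factor, so the elliptic variable plays no role in the final transformation law.

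\emph{The main obstacle} I anticipate is bookkeeping for the multiplier system rather than any conceptual difficulty: one must check that the multipliers contributed by $\eta^{-3}$, by $\widehat{\mu}$, and by the residual $\theta_{1}^{-2}$ combine consistently across all of $\Gamma_{\phi}$, and confirm that the resulting system is well-defined on $\Gamma_{\phi}$ (not merely on $\Gamma^{\phi}_{0}(4)$). Since the completion computation above is purely algebraic and the modularity follows formally from the $\Gamma_{\phi}$-invariance of \eqref{eq:phi_f} combined with the established transformation laws for $\widehat\mu$, $\eta$, and $\theta_{1}$, the entire argument is essentially assembling prior results; this is why the statement can legitimately be marked with $\Box$ as an immediate consequence of Proposition~\ref{prop:Fhr}, Proposition~\ref{prop:EZcoeff}, and the completion formulas \eqref{eq:Shat1}--\eqref{eq:Shat2}.
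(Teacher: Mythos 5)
Your proposal is correct and follows essentially the same route as the paper, which derives the result from the $\Gamma_{\phi}$-invariance of \eqref{eq:phi_f}, the completion $\widehat{\mu}=\mu-C$, and Proposition~\ref{prop:Fhr}, and notes in a footnote exactly your alternative derivation of the completion via \eqref{eq:Shat1}--\eqref{eq:Shat2} and \eqref{eq:f01chi}. Your write-up simply makes explicit the weight bookkeeping and the cancellation $C\bigl(f_{0}\theta_{3}(2\tau)+f_{1}\theta_{2}(2\tau)\bigr)=\chi C$ that the paper leaves to the reader.
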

Therefore we have established (ii) of Theorem~\ref{thm:MainTheoremF}.


\section{Jacobi trace functions of SVOAs and their modules}\label{Section:TraceFunctions}

We present a concept of parity root and discuss Jacobi (twisted) super trace functions of super vertex operator algebras (SVOAs) and investigate their properties. Also, we use the Jacobi twisted super trace to define the elliptic genus with parity root of such SVOAs.

\subsection{Jacobi trace functions}

Let $V$  be a SVOA of central charge $\cc$ with half integral Virasoro grading $V=\oplus_{n\in\half \Z}V_{n}$. Then $V$ also possesses a $\Z_2$ parity grading, $V=\Vbar{0}\oplus \Vbar{1}$, for parity $\vert v \rvert =2\wt(v)\bmod 1$ for $v\in V$ of Virasoro weight $\wt(v)$. 
Such a SVOA is often called a Nevue-Schwarz sector  in physics if it contains  a  superconformal or a fermionic subalgebra.
There is a canonical involution $\sigma$ associated with  parity given for $v$ of weight $\wt(v)$ by 
\[
\sigma v=(-1)^{|v|}v=e^{2\pi \im \wt(v)}v. 
\]

Let $h\in V_{1}$ be such that $h(0)$ acts semisimply with integer eigenvalues on a $V$-module $M$ and let  $h(1)h=2m\vac$ for ``level" or ``index" $m\in \C$. We call  such a vector $h$ a root of $V$. 
For the pair $(M,h)$, define the Jacobi trace function 
\begin{align}
Z_{M,h}(z; \tau)&:=\tr_{M}\left(\zeta^{h(0)}q^{L(0)-\tfrac{\cc}{24}}\right).
 \notag 
\end{align}

This is a natural extension of Jacobi trace functions for VOAs \cite{KrauelMason-Weak,KrauelMason-1pt}. 
Li's construction of a $g$-twisted module (see \cite{Li-Local} and below) based on the trivial automorphism $g=e^{2\pi \im h(0)}$ implies   for all $(\lambda,\mu) \in\Z^{2}$ that
\begin{align}
Z_{V,h}(z+\lambda  \tau+\mu ; \tau ) =q^{-m\lambda ^2}\zeta^{-2m\lambda }Z_{M_{\lambda},h}(z; \tau ),
\notag 
\end{align}
where $M_{\lambda}$ is a $\Z$-graded $V$-module \cite{DLiuMa,KrauelMason-Weak,KrauelMason-1pt}.

Now, we will study conditions that $Z_{M_{\lambda},h} =Z_{V,h}$ leading to Jacobi $z$ periodicity:
\begin{align}
Z_{V,h}(z+\lambda  \tau+\mu; \tau  ) =q^{-m\lambda ^2}\zeta^{-2m\lambda }Z_{V,h}(z; \tau ).
\label{eq:ZVJperiod}
\end{align}
Observe that \eqref{eq:ZVJperiod}  requires $m\in\half\Z$ since the Virasoro grading is half-integral.

\begin{lemma}
Assume that $Z_{V,h}(z; \tau)=\sum_{n\in \half \Z,r\in  \Z}c(n,r)q^{n-\cc/24} \zeta^r$ obeys \eqref{eq:ZVJperiod}. Then the coefficient $c(n,r)$ depends only on the discriminant $r^2-4mn$ and the value of $r \bmod 2m$:
\begin{align*}
c(n,r)=c_{r}(4mn-r^2),
\end{align*} 
where $c_{r'}(N)=c_r(N)$ for $r'=r\bmod 2m$ and $N\in\Z$.
\end{lemma}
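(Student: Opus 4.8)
The plan is to extract a recursion on the Fourier coefficients $c(n,r)$ from the single nontrivial generator of \eqref{eq:ZVJperiod}, namely the translation $z\mapsto z+\tau$ (the case $(\lambda,\mu)=(1,0)$); the case $(\lambda,\mu)=(0,1)$ merely reaffirms that $Z_{V,h}$ is a Laurent series in $\zeta$ with integer exponents and yields no new information.

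First I would substitute the given $q,\zeta$-expansion into both sides of \eqref{eq:ZVJperiod} with $(\lambda,\mu)=(1,0)$. Since $\zeta\mapsto q\zeta$ under $z\mapsto z+\tau$, the left-hand side becomes $\sum c(n,r)\,q^{n+r-\cc/24}\zeta^{r}$, while the right-hand side $q^{-m}\zeta^{-2m}Z_{V,h}$ becomes $\sum c(n,r)\,q^{n-m-\cc/24}\zeta^{r-2m}$. Here I would invoke the observation recorded below \eqref{eq:ZVJperiod} that $m\in\tfrac12\Z$, so that $2m\in\Z$ and the shift $r\mapsto r-2m$ stays within the integer-exponent lattice. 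Reindexing $r\mapsto r+2m$ on the right and comparing the coefficient of each monomial $q^{t-\cc/24}\zeta^{r}$ then yields the recursion
\[
c(n,r)=c(n+r+m,\,r+2m).
\]

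The heart of the argument is a bookkeeping step. I would verify directly that the affine map $(n,r)\mapsto(n+r+m,\,r+2m)$ leaves the discriminant $4mn-r^{2}$ invariant and shifts $r$ by $2m$, hence preserves $r\bmod 2m$. Iterating the recursion $k$ times gives $c(n,r)=c(n+kr+mk^{2},\,r+2mk)$, so the orbit of $(n,r)$ contains exactly one pair for each value of $r$ in the residue class $r\bmod 2m$, all sharing the common discriminant $N:=4mn-r^{2}$. Because $N$ together with any fixed value of $r$ determines $n$ uniquely via $n=(N+r^{2})/(4m)$, any two index pairs with the same discriminant and the same residue $r\bmod 2m$ lie in a single orbit; thus $c(n,r)$ is constant on such a set and depends only on $(N,\,r\bmod 2m)$. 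Setting $c_{r}(N):=c(n,r)$ with $N=4mn-r^{2}$ is therefore well-defined and satisfies $c_{r'}(N)=c_{r}(N)$ whenever $r'\equiv r\bmod 2m$, which is the asserted statement.

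The computations are all elementary, so there is no genuine analytic obstacle; the one point demanding care is the reindexing across the half-integer Virasoro grading. Specifically, I must confirm that the $\zeta$-power shift by $2m$ is an integer shift (guaranteed by $m\in\tfrac12\Z$) and that the resulting $q$-exponents $n+r+m$ again lie in $\tfrac12\Z$, so that the term-by-term coefficient comparison producing the recursion is legitimate.
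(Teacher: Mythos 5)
Your proof is correct and is exactly the argument the paper invokes: the paper's proof simply cites \cite[Theorem~2.2]{EZ-JacobiForms} ``but where here we allow the $q$-exponents and index $m$ to be half-integral,'' and your derivation of the recursion $c(n,r)=c(n+r+m,\,r+2m)$ from the $(\lambda,\mu)=(1,0)$ case, together with the invariance of $4mn-r^2$ and the check that $2m\in\Z$ and $n+r+m\in\half\Z$, is precisely that argument written out with the half-integrality bookkeeping made explicit.
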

\begin{proof}
The proof is essentially  that of \cite[Theorem~2.2]{EZ-JacobiForms} but where here we allow the $q$-exponents and index $m$ to be half-integral. 
\end{proof}
This lemma implies that $Z_{V,h}$ can be expanded in a basis of theta functions $\theta_{m,\mu}$ of \eqref{eq:EZtheta} as discussed in \cite[Section~5]{EZ-JacobiForms}. 
\begin{lemma}
	\label{lem:EZexp}
	Assume that $Z_{V,h}$ obeys \eqref{eq:ZVJperiod}. Then 
\begin{align}
Z_{V,h}(z; \tau)=\sum_{\mu \bmod 2m} g^{V,h}_\mu(\tau) \theta_{m,\mu}(z; \tau),
\label{eq:ZVEZ}
\end{align}
where for $\mu \bmod 2m$
\begin{align*}
g^{V,h}_\mu(\tau) :=q^{-\tfrac{\cc}{24}} \sum_{N\in\Z}c_{\mu}(N)q^{\frac{N}{4m}}.
\end{align*}
 \hfill $\Box$
\end{lemma}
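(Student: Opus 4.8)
The plan is to recognize that the previous lemma already establishes the key structural fact: writing $Z_{V,h}(z;\tau)=\sum_{n,r}c(n,r)q^{n-\cc/24}\zeta^r$, the coefficients satisfy $c(n,r)=c_r(4mn-r^2)$ where $c_r$ depends only on $r\bmod 2m$. First I would group the terms of $Z_{V,h}$ according to the residue class $\mu:=r\bmod 2m$. For each fixed $\mu$, the summation runs over all $r\equiv \mu\pmod{2m}$ together with the associated $n$-values determined by the constraint that $4mn-r^2$ be a fixed integer $N$. The goal is to factor the $z$-dependence into the theta series $\theta_{m,\mu}(z;\tau)=\sum_{\ell\equiv\mu\,(2m)}q^{\ell^2/4m}\zeta^\ell$ of \eqref{eq:EZtheta}, leaving a purely $\tau$-dependent coefficient $g^{V,h}_\mu(\tau)$.

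The concrete step is the change of variables. For a term $c(n,r)q^{n-\cc/24}\zeta^r$, I would set $\ell=r$ (the exponent of $\zeta$) and write $N=4mn-r^2$, so that $n=\tfrac{N+\ell^2}{4m}$ and the $q$-exponent becomes $q^{n-\cc/24}=q^{-\cc/24}q^{N/4m}q^{\ell^2/4m}$. Using $c(n,r)=c_\mu(N)$, the full double sum then reorganizes as
\begin{align*}
Z_{V,h}(z;\tau)=q^{-\cc/24}\sum_{\mu\bmod 2m}\sum_{N\in\Z}c_\mu(N)q^{N/4m}\sum_{\ell\equiv\mu\,(2m)}q^{\ell^2/4m}\zeta^\ell,
\end{align*}
where the innermost sum over $\ell$ is exactly $\theta_{m,\mu}(z;\tau)$, and the remaining factor is precisely $g^{V,h}_\mu(\tau)$ as defined. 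This yields \eqref{eq:ZVEZ} directly.

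I expect the main obstacle to be bookkeeping rather than any deep difficulty: I must verify that the reindexing is a bijection, i.e. that every pair $(n,r)$ in the original expansion corresponds to exactly one triple $(\mu,N,\ell)$ with $\ell\equiv\mu\pmod{2m}$ and $N=4mn-\ell^2$, and conversely. The subtlety is that $n$ and $N$ range over half-integers or integers in a way dictated by the half-integral Virasoro grading: since $Z_{V,h}$ is summed over $n\in\half\Z$, one must confirm that $N=4mn-r^2$ indeed takes integer values (so that the claimed sum over $N\in\Z$ is correct), which relies on $m\in\half\Z$ from \eqref{eq:ZVJperiod} and the parity constraints linking $n$ and $r$. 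Once the admissible range of $N$ is pinned down, the rearrangement is an absolutely convergent (formal $q$-series) manipulation, and interchanging the order of summation is justified termwise. I would note that this is the natural half-integral generalization of the theta decomposition in \cite[Section~5]{EZ-JacobiForms}, so the argument parallels the classical one with the single modification of tracking half-integral exponents.
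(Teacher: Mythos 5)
Your proposal is correct and is exactly the argument the paper intends: the lemma is stated with no written proof because it follows immediately from the preceding lemma's identity $c(n,r)=c_r(4mn-r^2)$ via the standard theta-decomposition regrouping of \cite[Section~5]{EZ-JacobiForms}, which is precisely the change of variables $N=4mn-r^2$, $\ell=r$ that you carry out. Your attention to the integrality of $N$ under the half-integral grading (using $m\in\half\Z$) is the right bookkeeping point and is consistent with the paper's setup.
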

We list some properties surrounding this decomposition.
\begin{corollary}
\label{cor:cpos}
The $q$-series $g^{V,h}_\mu$ obeys the following properties.
\begin{enumerate}
	\item[(i)] The coefficient $c_\mu(N)\ge 0$ for all $N\in\Z$.
	\item[(ii)] If the Virasoro grading is non-negative, then $c_\mu(N)=0$ for all $N<0$.
	\item[(iii)] We have $Z_{V,h}(z; \tau)$ is convergent  if and only if $g^{V,h}_\mu(\tau)$ is convergent for all $\mu \bmod 2m$ where  $\tau\in\HH$ and $z\in\C$.
\end{enumerate}
\end{corollary}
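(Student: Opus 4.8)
The plan is to prove the three properties of Corollary~\ref{cor:cpos} by working directly from the definitions of the coefficients $c_\mu(N)$ in terms of the original trace function. The key observation is that by Lemma~\ref{lem:EZexp}, the coefficients $c_\mu(N)$ are essentially a relabeling of the Fourier coefficients $c(n,r)$ of $Z_{V,h}$, which in turn are graded dimensions of weight spaces of the module $V$. Specifically, since $Z_{V,h}(z;\tau)=\tr_V(\zeta^{h(0)}q^{L(0)-\cc/24})=\sum_{n,r}c(n,r)q^{n-\cc/24}\zeta^r$, each $c(n,r)$ counts the dimension of the simultaneous eigenspace of $L(0)$ and $h(0)$ with eigenvalues $n$ and $r$ respectively.

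For part (i), I would observe that $c(n,r)=\dim\{v\in V : L(0)v=nv,\ h(0)v=rv\}$ is manifestly a nonnegative integer, being the dimension of a vector space. Since $c_\mu(N)=c(n,r)$ for appropriate $(n,r)$ with $4mn-r^2=N$ and $r\equiv\mu\bmod 2m$ (from the relabeling $N=4mn-r^2$ in Lemma~\ref{lem:EZexp}), nonnegativity is immediate. For part (ii), under the hypothesis that the Virasoro grading is non-negative, we have $c(n,r)=0$ whenever $n<0$; I would then need to check that $N<0$ forces the relevant $n$ to be negative, or more precisely that every $(n,r)$ contributing to $c_\mu(N)$ with $N<0$ has $n<0$. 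This requires examining the relation $N=4mn-r^2$: the subtlety is that $N<0$ does not by itself force $n<0$ for fixed $r$. However, since $Z_{V,h}$ obeys the periodicity \eqref{eq:ZVJperiod}, the earlier lemma guarantees $c(n,r)$ depends only on the discriminant and $r\bmod 2m$, so I may choose the representative with $r$ of minimal absolute value in its residue class; this makes the relationship between the sign of $N$ and the sign of $n$ transparent.

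For part (iii), the equivalence of convergence I would establish in both directions. The forward direction is straightforward: if $Z_{V,h}$ converges, then so does each coefficient series $g^{V,h}_\mu$ since the theta functions $\theta_{m,\mu}$ form a linearly independent system and one can extract the $g^{V,h}_\mu$ by the orthogonality/periodicity relations \eqref{eq:EZthetaflow}, which exhibit each $g^{V,h}_\mu$ as a finite linear combination of specializations of $Z_{V,h}$. For the reverse direction, if all $g^{V,h}_\mu(\tau)$ converge, then the finite sum \eqref{eq:ZVEZ} converges because each $\theta_{m,\mu}(z;\tau)$ is an entire function of $z$ that converges for $\tau\in\HH$; absolute convergence of finitely many convergent summands yields convergence of $Z_{V,h}$.

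The main obstacle I anticipate is part (ii): the bookkeeping to confirm that $N<0$ genuinely forces a negative Virasoro weight, given that the map $(n,r)\mapsto(N,\mu)$ is many-to-one and the non-negativity of the grading is only controlled in the $n$-variable. The resolution hinges on using the discriminant-dependence lemma to reduce to a canonical representative within each residue class $r\bmod 2m$, after which the inequality $4mn=N+r^2$ together with $m>0$ (recall $m\in\tfrac12\Z$ and the form is positive in the relevant sense) makes the sign analysis routine. Parts (i) and (iii) are essentially formal consequences of the vector-space interpretation of the coefficients and the analytic properties of theta functions, so I expect those to be short.
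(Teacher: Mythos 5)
Parts (i) and (iii) of your proposal are essentially fine. For (i) you and the paper make the same observation: $c(n,r)$ is the dimension of a simultaneous eigenspace of $L(0)$ and $h(0)$, hence non-negative, and $c_\mu(N)$ is a relabeling of these. For (iii) your reverse direction coincides with the paper's; your forward direction (extracting each $g^{V,h}_\mu$ from finitely many shifts $z\mapsto z+\tfrac{j}{2m}$ via linear independence of the $\theta_{m,\mu}$) works but is more laborious than the paper's, which simply sets $z=0$ and uses that a convergent sum of $q$-series with non-negative coefficients forces each summand to converge.

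The genuine gap is in (ii), at exactly the point you flag, and your proposed fix does not close it. Choosing the representative $r=\mu_0$ of minimal absolute value in the class $\mu\bmod 2m$ gives $4mn=N+\mu_0^2$ with $0\le \mu_0^2\le m^2$, so $N<0$ only yields $n<\mu_0^2/(4m)\le m/4$; this does not force $n<0$ unless $\mu_0=0$. Concretely, at index $m=1$ and $\mu=1$ one has $N=4n-1$, so $n=0$ already gives $N=-1$, and non-negativity of the Virasoro grading says nothing about whether $c(0,\pm 1)=\dim\set{v\in V_0 : h(0)v=\pm v}$ vanishes. Thus ``minimal representative plus sign analysis'' is not a proof. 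The paper argues differently: it specializes to $z=0$, writes $Z_{V,h}(0;\tau)=\sum_{n\ge 0}\dim V_n\, q^{n-\cc/24}=\sum_{\mu}g^{V,h}_\mu(\tau)\theta_{m,\mu}(0;\tau)$, and invokes part (i) together with the non-negativity of the coefficients of $\theta_{m,\mu}(0;\tau)$ to exclude cancellation on the right, so that a nonzero $c_\mu(N)$ with $N<0$ would contribute a forbidden exponent. If you rework (ii) you should adopt this positivity/no-cancellation mechanism; note, however, that even there the smallest exponent contributed by $\theta_{m,\mu}(0;\tau)$ is $\mu_0^2/(4m)$ rather than $0$, so the residue-class bookkeeping you identified must still be confronted explicitly rather than made to disappear.
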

\begin{proof}
 For (i), since $Z_{V,h}$ is a trace function it follows that $c(n,r)\ge 0$. Turning to (ii) we note for non-negative grading the partition function can be written
\begin{align}
Z_{V,h}(0; \tau)&=\sum_{n\ge 0}\dim V_{n}\,q^{n-\tfrac{\cc}{24}}
=\sum_{\mu \bmod 2m} g^{V,h}_\mu(\tau)  \theta_{m,\mu}(0; \tau).
\label{eq:ZVpart}
\end{align}
 However, the coefficients of the $q$-series $g^{V,h}_\mu(\tau)$ and $  \theta_{m,\mu}(0; \tau)=\textthab{\frac{\mu}{2m}}{0}(2m\tau)$ are all non-negative. Hence it follows that $c_\mu(N)=0$ for all $N<0$. Finally, for (iii) if $g^{V,h}_\mu(\tau)$ is convergent for all $\mu \bmod 2m$ then $Z_{V,h}(z; \tau)$  is also from \eqref{eq:ZVEZ}. Conversely, from \eqref{eq:ZVpart}, the convergence of  $Z_{V,h}(0; \tau)$ implies the convergence of $g^{V,h}_\mu(\tau)$ for all $\mu\bmod 2m$ since all $q$-series have non-negative coefficients.
\end{proof}

In addition, we define a Jacobi super trace function by 
\begin{align}
\widetilde{Z}_{V,h}(z; \tau):=&
\operatorname{str}_{V}\left(\zeta^{h(0)}q^{L(0)-\tfrac{\cc}{24}}\right)
\notag\\
:=& \tr_{\Vbar{0}}\left(\zeta^{h(0)}q^{L(0)-\tfrac{\cc}{24}}\right)-\tr_{\Vbar{1}}\left(\zeta^{h(0)}q^{L(0)-\tfrac{\cc}{24}}\right)
\notag
\\
=&\tr_{V}\left(\sigma\zeta^{h(0)}q^{L(0)-\tfrac{\cc}{24}}\right).
\notag 
\end{align}
We call $h$  a \textit{parity root} of $V$ if $h$ generates the parity involution 
\begin{align}
\sigma=e^{ \im\pi h(0)}.
\notag 
\end{align}
Then we obtain a spectral flow relation for the supertrace
\begin{align}
\widetilde{Z}_{V,h}(z; \tau)&=Z_{V,h}\left(z+\half; \tau \right).
\label{eq:Zspec1}
\end{align}
Lemma \ref{lem:EZexp} along with \eqref{eq:EZthetaflow} implies the following.
\begin{lemma}
\label{LemmaI}
 We have 
 \[
 \widetilde{Z}_{V,h}(z; \tau) = \sum_{\mu \bmod{2m}} e^{\pi \im \mu} g_{\mu}^{V,h} (\tau) \theta_{m,\mu}(z; \tau).
 \]
  \hfill $\Box$
\end{lemma}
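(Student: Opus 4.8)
The plan is to combine the spectral-flow relation \eqref{eq:Zspec1} with the theta decomposition of Lemma \ref{lem:EZexp} and the half-period shift formula for the Eichler--Zagier theta functions recorded in the first line of \eqref{eq:EZthetaflow}. The key observation is that passing from the ordinary trace $Z_{V,h}$ to the supertrace $\widetilde{Z}_{V,h}$ amounts, by \eqref{eq:Zspec1}, to nothing more than the argument shift $z\mapsto z+\half$, and this shift acts diagonally on the theta basis $\theta_{m,\mu}$, scaling the $\mu$-th component by $e^{\pi \im \mu}$.

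First I would start from \eqref{eq:Zspec1}, namely $\widetilde{Z}_{V,h}(z;\tau)=Z_{V,h}\!\left(z+\half;\tau\right)$. Since the expansion \eqref{eq:ZVEZ} of Lemma \ref{lem:EZexp} is an identity of functions of $z\in\C$, it remains valid after substituting $z+\half$ for $z$. Because the coefficient functions $g_{\mu}^{V,h}(\tau)$ are independent of $z$, they are unaffected by the shift, and we obtain
\[
\widetilde{Z}_{V,h}(z;\tau)=\sum_{\mu \bmod 2m} g_{\mu}^{V,h}(\tau)\,\theta_{m,\mu}\!\left(z+\half;\tau\right).
\]

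Next I would apply the first relation of \eqref{eq:EZthetaflow}, that is $\theta_{m,\mu}\!\left(z+\half;\tau\right)=e^{\pi \im \mu}\theta_{m,\mu}(z;\tau)$, to each summand. Pulling the scalar $e^{\pi \im \mu}$ out of the $\mu$-th term then yields precisely the claimed formula
\[
\widetilde{Z}_{V,h}(z;\tau)=\sum_{\mu \bmod 2m} e^{\pi \im \mu} g_{\mu}^{V,h}(\tau)\,\theta_{m,\mu}(z;\tau).
\]

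There is no substantial obstacle here: the argument is a direct substitution followed by a termwise application of the theta spectral-flow relation. The only point worth remarking is that the finite sum over $\mu \bmod 2m$ commutes with the half-period translation in $z$, which is immediate since each $\theta_{m,\mu}(\,\cdot\,;\tau)$ is an entire function of $z$ carried by a $z$-independent coefficient. The result follows.
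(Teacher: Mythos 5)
Your argument is correct and is exactly the paper's intended proof: the text preceding the lemma notes that it follows from Lemma \ref{lem:EZexp} together with \eqref{eq:EZthetaflow}, via the spectral-flow relation \eqref{eq:Zspec1} and the termwise half-period shift $\theta_{m,\mu}(z+\half;\tau)=e^{\pi \im \mu}\theta_{m,\mu}(z;\tau)$. Nothing further is needed.
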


\subsection{A $\sigma$-twisted module - the  Ramond Sector}

Let us assume that a parity root $h\in V$ exists.
We construct a canonical $\sigma$-twisted $V$-module for $\sigma=e^{\pi \im h(0)}$ as follows. 
Define for all $v\in V$
\[
Y_{\sigma}(v,z):=Y(\Delta (\sigma, z)v,z), 
\quad \Delta(\sigma,z):=z^{\half h(0)}\exp \left(- \half\sum\limits_{n\geq 1}\frac{h(n)}{n}%
(-z)^{-n}\right) .
\]
Then $(V,Y_{\sigma})$ is a $\sigma$-twisted $V$-module by a theorem of Li \cite{Li-Local}.
This $\sigma$-twisted $V$-module is referred to as the Ramond sector in physics.
In particular we have
\begin{align*}
Y_{\sigma}(h,z) &=Y(h,z) + mz^{-1}\id_V,
\\
Y_{\sigma}(\omega,z) &=Y(\omega,z) +\half z^{-1}Y(h,z) 
+\tfrac{1}{4}mz^{-2}\id_V,
\end{align*}
using $h(1)h=2m\vac$ and 
where $\omega$ is the Virasoro vector.
With $Y_{\sigma}(v,z)=\sum_{n\in \Z+\half |v|}v_\sigma(n)z^{-n-1}$, we find that
\begin{align}
h_{\sigma}(0)=h(0)+m\id_V,
\notag 
\quad
L_{\sigma}(0)=L(0)+\half h(0)+\tfrac{1}{4}m\id_V.
\notag 
\end{align}
We define a Ramond sector $\sigma$-twisted Jacobi trace function by
\begin{align}
Z_{V_{\sigma},h}(z; \tau):=&\tr_{V}\left(\zeta^{h_\sigma(0)}q^{L_\sigma(0)-\tfrac{\cc}{24}}\right),
\notag 
\end{align} 
related to the original Jacobi trace function by a second spectral flow relation
\begin{equation}
\label{eq:Zspec2}
\begin{aligned}
Z_{V_{\sigma},h}(z; \tau)
&= q^{\frac{1}{4}m} \zeta^{m}\tr_{V}\left(\zeta^{h(0)}q^{L(0)+\half h(0)-\tfrac{\cc}{24}}\right)
\\
&=q^{\frac{1}{4}m}\zeta^{m}  Z_{V,h}\left(z+\half \tau; \tau \right).
\end{aligned} 
\end{equation}
Analogous to Lemma \ref{LemmaI} we obtain the next result.
\begin{lemma}
\label{LemmaII}
 We have 
 \[
 Z_{V_\sigma ,h}(z; \tau) = \sum_{\mu \bmod{2m}} g_{\mu}^{V ,h}(\tau) \theta_{m,\mu +m}(z; \tau).
 \]
  \hfill $\Box$
\end{lemma}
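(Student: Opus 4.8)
The plan is to mirror the derivation of Lemma~\ref{LemmaI}, but using the second spectral flow relation \eqref{eq:Zspec2} in place of the first, together with the corresponding half-period shift of the theta series from \eqref{eq:EZthetaflow}. First I would start from \eqref{eq:Zspec2}, which gives
\begin{align*}
Z_{V_\sigma,h}(z;\tau)=q^{\frac{1}{4}m}\zeta^{m}\,Z_{V,h}\!\left(z+\half\tau;\tau\right).
\end{align*}
Into this I would substitute the theta expansion of $Z_{V,h}$ supplied by Lemma~\ref{lem:EZexp}, evaluated at the shifted argument $z+\half\tau$, obtaining
\begin{align*}
Z_{V_\sigma,h}(z;\tau)=q^{\frac{1}{4}m}\zeta^{m}\sum_{\mu\bmod 2m}g^{V,h}_\mu(\tau)\,\theta_{m,\mu}\!\left(z+\half\tau;\tau\right).
\end{align*}

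Next I would invoke the second line of the theta spectral flow \eqref{eq:EZthetaflow}, namely $\theta_{m,\mu}(z+\tfrac{\tau}{2};\tau)=q^{-\frac{m}{4}}\zeta^{-m}\theta_{m,\mu+m}(z;\tau)$, so that each summand acquires a factor $q^{-\frac{m}{4}}\zeta^{-m}$ while its index shifts from $\mu$ to $\mu+m$. The overall prefactor $q^{\frac{1}{4}m}\zeta^{m}$ then cancels exactly against $q^{-\frac{m}{4}}\zeta^{-m}$, leaving
\begin{align*}
Z_{V_\sigma,h}(z;\tau)=\sum_{\mu\bmod 2m}g^{V,h}_\mu(\tau)\,\theta_{m,\mu+m}(z;\tau),
\end{align*}
which is precisely the asserted identity.

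This is a direct computation rather than a result with a genuine obstacle; the parallel with Lemma~\ref{LemmaI} is exact, with the roles of the two lines of \eqref{eq:EZthetaflow} and of the two spectral flows \eqref{eq:Zspec1}, \eqref{eq:Zspec2} interchanged. The only points requiring any care are verifying the clean cancellation of the $q$- and $\zeta$-prefactors and noting that, since $g^{V,h}_\mu$ depends on $\mu$ only modulo $2m$ (by the Lemma preceding Lemma~\ref{lem:EZexp}), the sum over $\mu\bmod 2m$ is preserved under the reindexing $\mu\mapsto\mu+m$, so the range of summation need not be altered.
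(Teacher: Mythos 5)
Your proof is correct and is exactly the argument the paper intends: the paper states this lemma as an immediate consequence of the spectral flow relation \eqref{eq:Zspec2}, the expansion in Lemma~\ref{lem:EZexp}, and the second line of \eqref{eq:EZthetaflow}, precisely as you carry out. The cancellation of the $q^{\frac{1}{4}m}\zeta^{m}$ prefactor and the harmlessness of the reindexing $\mu\mapsto\mu+m$ modulo $2m$ are the only points to check, and you address both.
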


Finally, we define a Ramond sector  $\sigma$-twisted Jacobi supertrace function  by
\begin{equation}
\label{eq:SZVsig}
\begin{aligned}
\widetilde{Z}_{V_{\sigma},h}(z; \tau):=&\str_{V}\left(\zeta^{h_\sigma(0)}q^{L_\sigma(0)-\tfrac{\cc}{24}}\right)
\\
=&\tr_{V}\left(\sigma\zeta^{h_\sigma(0)}q^{L_\sigma(0)-\tfrac{\cc}{24}}\right),
\end{aligned}
\end{equation} 
for $\sigma = e^{\pi \im h(0)}$ related to the $\sigma$-twisted Jacobi trace function by a third spectral flow relation
\begin{equation}
\notag 
\begin{aligned}
\widetilde{Z}_{V_{\sigma},h}(z; \tau)&=(-1)^{m} Z_{V_{\sigma},h}\left(z+\half ; \tau\right)
=q^{\frac{1}{4}m} \zeta^{m} Z_{V,h}\left(z+\half \tau+\half ; \tau\right).
\end{aligned}
\end{equation} 

We define the \textit{elliptic genus with parity root $h$} of the SVOA $V$ to be the Ramond supertrace
\begin{align}
\mathcal{E}_{V,h}(z; \tau):=\widetilde{Z}_{V_{\sigma},h}(z; \tau).
\notag 
\end{align}
\begin{remark}
	Note that $h$ is a parity root if and only if $L(0)+\half h(0)$ has integral eigenvalues.
	Thus $L_{\sigma}(0)-\cc/24$ has integral eigenvalues when $\cc=6m$ as is found in the $N=4$ literature.
\end{remark}
\begin{remark}
\label{Remark:Ramond}
The Ramond sector character $\operatorname{tr}_{V_{\sigma ,h}}(q^{L_\sigma(0)-\cc/24})$ equals $(-1)^m R_{V_{\sigma},h}(\tau)$ for 
$R_{V_{\sigma},h}(\tau):=\mathcal{E}_{V,h}(\half ;\tau)$ (cf.\ \eqref{eq:Rchar}).
\end{remark}
Again, using \eqref{eq:EZthetaflow} we obtain the decomposition of $\mathcal{E}_{V,h}$.
\begin{lemma}
\label{LemmaIII}
 We have
 \[
 \mathcal{E}_{V,h}(z; \tau) = \sum_{\mu = \bmod{2m}} e^{\pi \im \mu} g_{\mu}^{V,h}(\tau) \theta_{\mu ,m+\mu} (z; \tau).
 \]
  \hfill $\Box$
\end{lemma}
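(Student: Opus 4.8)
The plan is to mirror the derivation of Lemma~\ref{LemmaI}, but starting from the Ramond-sector expansion of Lemma~\ref{LemmaII} in place of the expansion \eqref{eq:ZVEZ}. By definition $\mathcal{E}_{V,h}(z;\tau)=\widetilde{Z}_{V_{\sigma},h}(z;\tau)$, and the third spectral flow relation records that the supertrace is obtained from $Z_{V_{\sigma},h}$ by a half-period shift, $\widetilde{Z}_{V_{\sigma},h}(z;\tau)=(-1)^{m}Z_{V_{\sigma},h}(z+\half;\tau)$. So I would first substitute the expansion of Lemma~\ref{LemmaII} into the right-hand side, giving
\[
\mathcal{E}_{V,h}(z;\tau)=(-1)^{m}\sum_{\mu\bmod 2m}g^{V,h}_{\mu}(\tau)\,\theta_{m,\mu+m}\!\left(z+\half;\tau\right).
\]

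Next I would apply the half-period spectral flow from \eqref{eq:EZthetaflow}, namely $\theta_{m,\nu}(z+\half;\tau)=e^{\pi\im\nu}\theta_{m,\nu}(z;\tau)$, to each summand with $\nu=\mu+m$. This produces a factor $e^{\pi\im(\mu+m)}=e^{\pi\im\mu}e^{\pi\im m}$, so that
\[
\mathcal{E}_{V,h}(z;\tau)=(-1)^{m}e^{\pi\im m}\sum_{\mu\bmod 2m}e^{\pi\im\mu}\,g^{V,h}_{\mu}(\tau)\,\theta_{m,m+\mu}(z;\tau),
\]
which is the claimed decomposition once the scalar prefactor $(-1)^{m}e^{\pi\im m}$ is shown to equal $1$. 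Note that the computation naturally yields the index-$m$ theta functions $\theta_{m,m+\mu}$, so the first subscript in the statement should be read as $m$.

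I expect the computation itself to be entirely routine; the one genuine obstacle is the bookkeeping of phases for half-integral $m$, i.e.\ confirming that the $(-1)^{m}$ coming from the supertrace and the $e^{\pi\im m}$ coming from shifting the index-$(m+\mu)$ theta function cancel rather than combine into an extra sign $e^{2\pi\im m}=\pm1$. Since the Virasoro grading is only half-integral we know merely that $m\in\half\Z$, so $(-1)^{m}$ cannot be treated as a black box and must be read consistently with the conventions fixed when the spectral flow relations were derived. To remove all ambiguity I would derive the half-shift prefactor directly rather than quote it: using $h_{\sigma}(0)=h(0)+m\,\id_V$ one has $\sigma=e^{\pi\im h(0)}=e^{-\pi\im m}e^{\pi\im h_{\sigma}(0)}$, whence
\[
\tr_V\!\left(\sigma\,\zeta^{h_{\sigma}(0)}q^{L_{\sigma}(0)-\tfrac{\cc}{24}}\right)
=e^{-\pi\im m}\,\tr_V\!\left(e^{2\pi\im(z+\half)h_{\sigma}(0)}q^{L_{\sigma}(0)-\tfrac{\cc}{24}}\right)
=e^{-\pi\im m}\,Z_{V_{\sigma},h}\!\left(z+\half;\tau\right).
\]
This fixes the prefactor unambiguously as $e^{-\pi\im m}$, so after the theta shift the total scalar is $e^{-\pi\im m}e^{\pi\im(\mu+m)}=e^{\pi\im\mu}$ termwise, and the cancellation producing exactly $\mathcal{E}_{V,h}(z;\tau)=\sum_{\mu\bmod 2m}e^{\pi\im\mu}g^{V,h}_{\mu}(\tau)\,\theta_{m,m+\mu}(z;\tau)$ becomes manifest.
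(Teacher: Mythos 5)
Your proposal is correct and follows exactly the paper's (unwritten, one-line) argument: combine the decomposition of Lemma~\ref{LemmaII} with the half-period spectral flow \eqref{eq:EZthetaflow}, and your careful rederivation of the supertrace prefactor as $e^{-\pi \im m}$ correctly resolves the phase bookkeeping so that the net factor per term is $e^{\pi \im \mu}$. You are also right that the first subscript in the statement is a typo and should read $\theta_{m,\,m+\mu}$.
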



\section{Applications to the elliptic genus of a $K3$ surface}\label{Section:K3}

Before getting to our main example in the next section, we pause to discuss the theory developed so far applied to the elliptic genus of a $K3$ surface. The bulk of the content of this section is known in the literature since the $K3$ elliptic genus is a weak Jacobi form on the full Jacobi group, and thus falls in the realm of, for instance \cite{ EOT-Mathieu, CDH-Umbral, CDH-UmbralNiemeier} and others. However, it is worth seeing how the material presented above can be used to quickly recover what is known.
 
\subsection{Calculating $F^{K3}$ from the elliptic genus}

For the $K3$ elliptic genus we have $\phi^{K3}(z; \tau)=2\phi_{0,1}(z; \tau)$
 with 
 \begin{align*}
2\phi_{0,1}(z; \tau)=& 
8\left(
\frac{\theta_{2}(z; \tau)^{2}}{\theta_{2}(\tau)^{2}}
+\frac{\theta_{3}(z; \tau)^{2}}{\theta_{3}(\tau)^{2}}
+\frac{\theta_{4}(z; \tau)^{2}}{\theta_{4}(\tau)^{2}}
\right),
\end{align*}
a weak Jacobi form of weight $0$ and index $1$ for $\SL_2(\Z)$. 
Then we find
\begin{align*}
2\phi_{0,1}\left(z+\half ; \tau\right)=& 
8\left(
\frac{\theta_{1}(z; \tau)^{2}}{\theta_{2}(\tau)^{2}}
+\frac{\theta_{4}(z; \tau)^{2}}{\theta_{3}(\tau)^{2}}
+\frac{\theta_{3}(z; \tau)^{2}}{\theta_{4}(\tau)^{2}}
\right),
\end{align*}
so that from \eqref{eq:Rchar} we obtain
\begin{align*}
R^{K3}(\tau):= R^{\phi^{K3}}(\tau)=& 
8\left(
\frac{\theta_{4}(\tau)^{2}}{\theta_{3}(\tau)^{2}}
+\frac{\theta_{3}(\tau)^{2}}{\theta_{4}(\tau)^{2}}
\right)
\\
=& 16+512 q+4096 {q}^{2}+22528 {q}^{3}+98304 {q}^{4}+367616 {q}^{5}+\ldots 
\end{align*}
One can check that $R^{K3}(\tau)$ is $\Gamma_{0}(4)$ modular invariant as expected from Proposition~\ref{prop:EZcoeff}. There are three independent equivalence classes of cusps for $\Gamma_{0}(4)$ for  which we can choose representatives, $0$, $\half$, and $\infty$. Additionally, $\Gamma_{0}(4)$ is a genus zero modular group, and in fact  $R^{K3}(\tau)$ is a hauptmodul with a pole of order one at $0$ so that
\begin{align*}
R^{K3}(\tau)
=-16+32\left(\frac{\theta_{3}(2\tau)}{\theta_{4}(2\tau)}\right)^{4}=16+512\left(\frac{\eta(4\tau)}{\eta(\tau)}\right)^{8}.
\end{align*} 
Since $\chi=24$ we obtain the theta coefficients $f_{0}^{K3},f_{1}^{K3}$ via Proposition \ref{prop:EZcoeff} to be 
\begin{equation}
\label{eqn:K3h}
\begin{aligned}
f_{0}^{K3} =&
\frac{4\,\theta_{4}(2\tau)^{4}+16\,\theta_{3}(2\tau)^{4}}{\theta_{3}(2\tau)\theta_{4}(2\tau)^{4}}
=
20+216 q+1616 {q}^{2}+8032 {q}^{3}
+\ldots ,\\
f_{1}^{K3} =&
\frac{4\,\theta_{4}(2\tau)^{4}-16\,\theta_{2}(2\tau)^{4}}{\theta_{2}(2\tau)\theta_{4}(2\tau)^{4}}
=
-q^{-\frac{1}{4}}\left(-2+128 q+1026 {q}^{2}+5504 {q}^{3}
+\ldots\right),
\end{aligned}
\end{equation}
(using $\theta_{2}^{4}-\theta_{3}^{4}+\theta_{4}^{4}=0$). These describe a vector-valued modular form of weight $-\half $ for $\SL_2 (\Z)$ with multiplier matrix $\rho_{\Theta}^{-1}$. By Proposition \ref{prop:Fhr} we find, as expected, that
\begin{equation}
\notag
F^{K3}(\tau)=q^{-\frac{1}{8}}\left(-2+90 q+462 {q}^{2}+1540 {q}^{3}+4554 {q}^{4}+11592 {q}^{5}+\ldots \right).
\end{equation}

\subsection{Parity roots and the elliptic genus of a $K3$ surface}

At this point, we have only analyzed the elliptic genus of a $K3$ surface using the theory developed in Sections \ref{Section:ChiExp}--\ref{Section:FunctionF}. However, one could ask whether the parity root setup of a SVOA in Section \ref{Section:TraceFunctions} could give rise to the elliptic genus of a $K3$ surface discussed in the previous subsection.

Let $V_{K3}$ denote a SVOA of central charge $\cc$ with root $h$ such that 
\[
\mathcal{E}_{V_{K3},h}(z; \tau)=\pm 2\phi_{0,1}(z; \tau).
\]
The following proposition shows that for this to occur, $h$ cannot be a parity root.

\begin{proposition}
We have $h\in V_{K3}$ is not a parity root.
\end{proposition}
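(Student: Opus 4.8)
The plan is to argue by contradiction. Suppose $h$ \emph{is} a parity root; I will show this is incompatible with $\mathcal{E}_{V_{K3},h}(z;\tau)=\epsilon\,2\phi_{0,1}(z;\tau)$ for either sign $\epsilon=\pm1$. The point to keep in mind throughout is that the parity-root hypothesis is exactly what makes the supertrace spectral-flow relation \eqref{eq:Zspec1}, and hence the sign-definite theta expansion of Lemma~\ref{LemmaIII}, available; assuming $h$ is a parity root therefore switches on this structure, which I then collide with the explicit theta coefficients of $2\phi_{0,1}$ recorded in \eqref{eqn:K3h}.

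First I would fix the index. The function $\mathcal{E}_{V_{K3},h}$ is built from the index-$m$ theta functions $\theta_{m,\cdot}$ of Lemma~\ref{LemmaIII} (equivalently, it inherits the index $m$ of $Z_{V_{K3},h}$ through the index-preserving spectral flow \eqref{eq:Zspec2}), whereas $2\phi_{0,1}$ has index $1$; hence $\mathcal{E}_{V_{K3},h}=\epsilon\,2\phi_{0,1}$ forces $m=1$. With $m=1$, Lemma~\ref{LemmaIII} reads
\[
\mathcal{E}_{V_{K3},h}(z;\tau)=\sum_{\mu\bmod 2}(-1)^{\mu}g^{V_{K3},h}_{\mu}(\tau)\,\theta_{1,\mu+1}(z;\tau),
\]
and using $\theta_{1,2}=\theta_{1,0}$ together with \eqref{eqn:ThetaRelation} this becomes
\[
\mathcal{E}_{V_{K3},h}(z;\tau)=g^{V_{K3},h}_{0}(\tau)\,\theta_{2}(2z;2\tau)-g^{V_{K3},h}_{1}(\tau)\,\theta_{3}(2z;2\tau).
\]

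Next I would match this against the theta decomposition \eqref{eqn:EZdecomp} of $\epsilon\,2\phi_{0,1}$, namely $\epsilon f^{K3}_{0}\,\theta_{3}(2z;2\tau)+\epsilon f^{K3}_{1}\,\theta_{2}(2z;2\tau)$. Since $\theta_{2}(2z;2\tau)$ and $\theta_{3}(2z;2\tau)$ are supported on the odd and even powers of $\zeta$ respectively (by \eqref{eq:EZtheta} and \eqref{eqn:ThetaRelation}), they are linearly independent as functions of $z$, so I may equate their coefficients. The $\theta_{2}(2z;2\tau)$-components alone give the single relation I need, namely $g^{V_{K3},h}_{0}=\epsilon f^{K3}_{1}$.

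The contradiction is then purely one of signs. By Corollary~\ref{cor:cpos}(i) the $q$-series $g^{V_{K3},h}_{0}$ has non-negative coefficients, being assembled from the genuine (non-super) trace $Z_{V_{K3},h}$. On the other hand the explicit formula \eqref{eqn:K3h} gives $f^{K3}_{1}=2q^{-1/4}-128q^{3/4}-\cdots$, whose coefficients are of both signs; thus $\epsilon f^{K3}_{1}$ carries a strictly negative coefficient for either choice of $\epsilon$. This is incompatible with $g^{V_{K3},h}_{0}=\epsilon f^{K3}_{1}$, and the contradiction shows $h$ is not a parity root. I expect the only genuine subtlety to be the observation in the first paragraph: every tool invoked (the relation \eqref{eq:Zspec1}, Lemma~\ref{LemmaIII}, and the resulting sign-definiteness) is valid \emph{precisely because} $h$ was assumed to be a parity root, so the argument rules out parity roots rather than all roots, consistent with the statement. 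The remaining steps (index matching, linear independence of the two theta functions, and reading off the mixed signs of $f^{K3}_{1}$) are routine.
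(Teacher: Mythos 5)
Your proposal is correct and follows essentially the same route as the paper: assume $h$ is a parity root, apply Lemma~\ref{LemmaIII} to get $\mathcal{E}_{V_{K3},h}=g_0^{K3,h}\theta_{1,1}-g_1^{K3,h}\theta_{1,0}$, compare with the theta decomposition \eqref{eqn:EZdecomp} to force $g_0^{K3,h}=\pm f_1^{K3}$, and contradict the non-negativity from Corollary~\ref{cor:cpos}(i) using the mixed signs in \eqref{eqn:K3h}. The extra details you supply (pinning down $m=1$ and noting the linear independence of the two theta components) are fine and merely make explicit what the paper leaves implicit.
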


\begin{proof} Suppose that $h$ is a parity root. 
By Lemma \ref{LemmaIII} we find
\[
 \mathcal{E}_{V_{K3},h}(z; \tau) = g_0^{K3,h}(\tau)\theta_{1,1}(z; \tau)-g_1^{K3,h}(\tau)\theta_{1,0}(z; \tau).
\]
Comparing to \eqref{eqn:EZdecomp} implies $g_0=\pm f_{1} $ and $g_1=\mp f_{0} $ of \eqref{eqn:K3h}. However, this contradicts Corollary~\ref{cor:cpos}~(i) since the coefficients of $g_{0}^{K3,h}(\tau)$ are of mixed signature. Hence $h$ is not a parity root.
\end{proof}

This rules out some ways in which the elliptic genus of a $K3$ surface may arise from a SVOA.

\begin{corollary}
The elliptic genus of a $K3$ surface cannot be obtained as the  elliptic genus of  a SVOA with a parity root $h$.  \hfill $\Box$
\end{corollary}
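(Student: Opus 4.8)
The plan is to assume for contradiction that $h$ is a parity root and derive the decomposition of the elliptic genus as a supertrace, then exploit the positivity of the coefficients $g_\mu^{K3,h}$ guaranteed by Corollary~\ref{cor:cpos}~(i). First I would invoke Lemma~\ref{LemmaIII}, which expresses $\mathcal{E}_{V_{K3},h}$ as a sum over $\mu \bmod 2m$ of terms $e^{\pi\im\mu}g_\mu^{V,h}(\tau)\theta_{\mu,m+\mu}(z;\tau)$. The working hypothesis is that $\mathcal{E}_{V_{K3},h}(z;\tau) = \pm 2\phi_{0,1}(z;\tau)$, and since $2\phi_{0,1}$ has index $1$, this forces $m=1$, so that the relevant characters are indexed by $\mu \in \{0,1\}$.

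Next I would write out the decomposition explicitly at $m=1$. With $\mu=0$ one gets $e^{0}=1$ and the theta piece $\theta_{0,1}$, and with $\mu=1$ one gets $e^{\pi\im}=-1$ and the piece $\theta_{1,0}$; using the indexing conventions of \eqref{eq:EZtheta} and \eqref{eqn:ThetaRelation} these identify with $\theta_{1,1}(z;\tau)$ and $\theta_{1,0}(z;\tau)$ respectively, yielding
\[
\mathcal{E}_{V_{K3},h}(z;\tau) = g_0^{K3,h}(\tau)\theta_{1,1}(z;\tau) - g_1^{K3,h}(\tau)\theta_{1,0}(z;\tau).
\]
I would then match this against the theta decomposition \eqref{eqn:EZdecomp} of $2\phi_{0,1}$, namely $f_0\theta_3(2z;2\tau)+f_1\theta_2(2z;2\tau)$, using the correspondence $\theta_{1,0}(z;\tau)=\theta_3(2z;2\tau)$ and $\theta_{1,1}(z;\tau)=\theta_2(2z;2\tau)$ from \eqref{eqn:ThetaRelation}. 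Comparing coefficients of the (linearly independent) theta functions gives the identifications $g_0^{K3,h}=\pm f_1^{K3}$ and $g_1^{K3,h}=\mp f_0^{K3}$, where $f_0^{K3},f_1^{K3}$ are the functions computed in \eqref{eqn:K3h}.

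The contradiction then comes from positivity. Corollary~\ref{cor:cpos}~(i) asserts that every coefficient $c_\mu(N)$, and hence every $q$-coefficient of each $g_\mu^{K3,h}$, is non-negative, because $g_\mu^{K3,h}$ arises from a genuine trace. But the explicit expansion of $f_1^{K3}$ in \eqref{eqn:K3h} has coefficients $-q^{-1/4}(-2+128q+\cdots)$ of mixed sign after clearing the overall prefactor, so $\pm f_1^{K3}$ cannot have all non-negative coefficients regardless of the choice of sign. This violates Corollary~\ref{cor:cpos}~(i), contradicting the assumption that $h$ is a parity root. I expect the main obstacle to be a careful bookkeeping step rather than a conceptual one: one must verify that the theta-index conventions line up correctly (so that the $e^{\pi\im\mu}$ factors produce the claimed signs and the identification $g_0=\pm f_1$, $g_1=\mp f_0$ is the correct pairing) and confirm that the mixed-sign phenomenon genuinely survives the overall $\pm q^{-1/4}$ normalization. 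Once that matching is pinned down, the positivity contradiction is immediate, and the corollary follows since a parity root is precisely what is required to realize $\mathcal{E}_{V,h}$ as a supertrace in the sense of \eqref{eq:SZVsig}.
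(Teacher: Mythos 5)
Your argument is correct and is essentially identical to the paper's: the paper proves the preceding proposition (that $h$ cannot be a parity root) by invoking Lemma~\ref{LemmaIII} at index $m=1$, matching the resulting decomposition $g_0^{K3,h}\theta_{1,1}-g_1^{K3,h}\theta_{1,0}$ against \eqref{eqn:EZdecomp} to get $g_0=\pm f_1^{K3}$, $g_1=\mp f_0^{K3}$, and then contradicting the non-negativity of trace coefficients from Corollary~\ref{cor:cpos}~(i) via the mixed-sign expansion of $f_1^{K3}$ in \eqref{eqn:K3h}. The corollary then follows immediately, exactly as you conclude.
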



\section{A rank $6$ SVOA example}\label{Section:Rank6}

\subsection{The structure}

In this subsection, we first recall the connection between a particular odd lattice SVOA example and the $N=4$ superconformal algebra $\mathcal{A}$ of central charge $6$. We refer the reader to \cite{Kac-Beginners} for more information about general SVOA theory.

Let $X=\{\alpha_{i}\}_{i=1}^{6}$  be an orthogonal basis for $\R^6$ with  $(\alpha_{i},\alpha_{j})=3\delta_{ij}$ for $i,j=1,\ldots,6$. 
Let ${L+}$  be the  odd, positive-definite, integral lattice spanned by $X$ together with
\begin{align*}
h{:=}\tfrac{1}{3}(\alpha_1{+}\hdots{+}\alpha_6){\in} {L+}. 
\end{align*}
  For ease of notation, throughout the remainder of this section we set $L:={L+}$. 
Note that $(h,\alpha_{i})=1$ for each $i$ and $(h,h)=2$. 

It was shown in \cite[Example 2 and Proposition 26]{MTY-N=4} that the lattice SVOA $V_{L}=\oplus_{n=0}^{\infty}(V_{L})_{(n)}$ contains an $N=4$
superconformal algebra of central charge $6$ generated by the following four states of weight $\tfrac{3}{2}$:
\begin{equation}
 \notag 
\begin{aligned}
&\tau^{+}{:=}\sum_{\alpha{\in}X}c_{\alpha}e^{\alpha},\quad  \tau^{-}{:=}\sum_{\alpha{\in}X} c_{\alpha}\varepsilon(h, \alpha)e^{\alpha-h},
\\ &\overline{\tau}^{+}{:=}\sum_{\alpha\in X} d_{h{-}\alpha}e^{h-\alpha},\quad
 \overline{\tau}^{-}{:=}-\sum_{\alpha\in X} d_{h-\alpha}\varepsilon(h, \alpha)e^{-\alpha},
\end{aligned}
\end{equation}
where  
\begin{equation}
\notag 
c_{\alpha}d_{h-\alpha}\varepsilon(h,  \alpha)=-\tfrac{1}{3},
\end{equation}
for lattice cocycle factor  $\varepsilon$. In addition, as Lie algebras, $(V_{L})_{1}$ and $\slt$ are isomorphic \cite{MTY-N=4}. It is known that $V_L$ is a subVOA of nearly all odd Niemeier lattice SVOAs  \cite{HoehnMason-Most}.

\subsection{The associated trace functions}

We utilize the theory developed in the sections above to compute $F^L :=F^{\mathcal{E}_{V_{L},h}}$ for the function $\mathcal{E}_{V_{L},h}$.

\subsubsection{Jacobi trace functions for $\VLp$}

Let $L_3=\langle \alpha_{i} \rangle\cong (\sqrt{3}\Z)^6$, a rank 6 sublattice of $L$.
Clearly $3h\in L_3$ so we obtain the coset decomposition
\begin{align}
L=L_3\cup (L_3+h)\cup( L_3+2h).
\notag 
\end{align}
We may write $\beta\in L$ as $\beta=\sum_{i=1}^{6}\left(m_i+\frac{r}{3}\right) \alpha_i$ for $m_i\in\Z$ and $r\in\{0,1,2\}$.
Therefore,
\begin{equation}
\notag 
\begin{aligned}
(\beta,\beta)&=3\sum_{i=1}^6 \left(m_i +\frac{r}{3}\right)^2
=\sum_{i=1}^6 (3m_i ^2+2r m_i)+2r^2,
\\
(\beta,h)  &=\sum_{i=1}^6 \left(m_i +\frac{r}{3}\right)
= \sum_{i=1}^6 m_i +2r.
\end{aligned}
\end{equation}
We also note the identity
\begin{align}
(\beta,\beta)=(\beta,h) \bmod 2.
\label{eq:bbh}
\end{align}
A straightforward calculation shows that for $r=0,1,2$, we have (cf.\ \eqref{eq:thetaLh})
\begin{align*}
\theta^h_{L_3+rh}(z; \tau):=\sum_{\beta\in L_3+rh} q^{\half(\beta,\beta)}\zeta^{(\beta,h)}=\left(\thab{\frac{r}{3}}{0}(z; 3\tau)\right)^6
\end{align*}
and therefore the Jacobi trace function for $V_L$ and $h$ is given by
\begin{align}
Z_{\VLp,h}(z; \tau)&=\frac{\theta_{L}^h(z; \tau)}{\eta(\tau)^6}
= \sum_{r\in\{0,1,2\}}\left( \eta (\tau)^{-1} \thab{\frac{r}{3}}{0}(z; 3\tau)\right)^6.
\label{eq:ZLp}
\end{align}
We may also express $Z_{\VLp,h}$ in terms of its theta decomposition \eqref{eq:ZVEZ} to find
\begin{align}
Z_{\VLp,h}(z; \tau)=g^{\VLp,h}_0(\tau) \theta_{1,0}(z; \tau)+g^{\VLp,h}_1(\tau) \theta_{1,1}(z; \tau)
\notag 
\end{align}
(recall \eqref{eqn:ThetaRelation}), and thus
\begin{align*}
g^{\VLp,h}_0(\tau) &= q^{-\frac{1}{4}}\left(1+6 q+57 {q}^{2}+308 {q}^{3}+1305 {q}^{4}+4800 {q}^{5}+15764 {q}^{6} \right.
\\
&
\qquad \left. +47466 {q}^{7}+133461 {q}^{8}+\ldots
  \right),
  \\
g^{\VLp,h}_1(\tau) &= 12\,q+92\,{q}^{2}+444\,{q}^{3}+1836\,{q}^{4}+6520\,{q}^{5}+20916\,{q}^{6}
\\
&\qquad +61824\,{q}^{7}+171244\,{q}^{8} +\ldots .
\end{align*}
Note that all the coefficients are non negative integers as expected from Corollary~\ref{cor:cpos}. Closed formulas for $g^{\VLp,h}_0$ and $g^{\VLp,h}_1$ follow from Proposition~\ref{prop:EZcoeff} as described below.

Let $v=u\otimes e^{\beta}\in  \VLp$ with $\wt(v)\in\Z +\half (\beta,\beta)$. 
Then, using  \eqref{eq:bbh} we find $$\sigma v=e^{ \im\pi(\beta,\beta)}v=e^{ \im\pi(\beta,h)}v=e^{ \im\pi h(0)}v.$$ Thus $h$ is a parity root of $\VLp$. Hence  from \eqref{eq:thetaflow} and \eqref{eq:Zspec1} the Jacobi supertrace function is given by
\begin{align}
\widetilde{Z}_{\VLp,h}(z; \tau)&={Z}_{\VLp,h}\left(z+\half; \tau \right)
=\sum_{r\in\{0,1,2\}}\left(\eta (\tau)^{-1} \thab{\frac{r}{3}}{\half}(z; 3\tau)\right)^6
\notag 
\\
&=g^{\VLp,h}_0(\tau) \theta_{1,0}(z; \tau)-g^{\VLp,h}_1(\tau) \theta_{1,1}(z; \tau),
\notag \label{eq:SZLp}
\end{align}
using Lemma \ref{LemmaI}.

\subsubsection{Jacobi $\sigma$-twisted (super)trace functions for $\VLp$}
The parity root $h\in\VLp$ is of level $m=1$ so that by \eqref{eq:thetaflow}, \eqref{eq:Zspec2} and \eqref{eq:ZLp} we find
\begin{equation}
\notag \label{eq:ZLpsig}
\begin{aligned}
Z_{\left(\VLp\right)_\sigma,h}(z; \tau)&=
 q^{\frac{1}{4}}\zeta\, \frac{\theta_{L}^h\left(z+\half \tau,\tau\right)}{\eta(\tau)^6}
= \sum_{r\in\{0,1,2\}}\left(\eta (\tau)^{-1} \thab{\frac{r}{3}+\frac{1}{6}}{0}(z; 3\tau )\right)^6
\\& = \sum_{r\in\{1,3,5\}}\left(\eta (\tau)^{-1}\thab{\frac{r}{6}}{0}(z; 3\tau )\right)^6
=\frac{\theta_{L+\half h}^h\left(z; \tau\right)}{\eta(\tau)^6}.
\end{aligned}
\end{equation}
This last expression also follows from the construction  of the $e^{ \im\pi h(0)}$-twisted module  from  the coset lattice $L+\half h$. In terms of its theta decomposition we find  from Lemma \ref{LemmaII} that
\begin{align}
Z_{\left(\VLp\right)_\sigma,h}(z; \tau)=g^{\VLp,h}_0(\tau) \theta_{1,1}(z; \tau)+g^{\VLp,h}_1(\tau) \theta_{1,0}(z; \tau).
\notag 
\end{align}
Lastly, we compute the elliptic genus \eqref{eq:SZVsig} to find
\begin{align}
\mathcal{E}_{\VLp,h}(z; \tau)&=-{Z}_{\left(\VLp\right)_\sigma,h}\left(z+\half; \tau \right)
=-\sum_{r\in\{1,3,5\}}\left(\eta (\tau)^{-1} \thab{\frac{r}{6}}{\half}(z; 3\tau )\right)^6
\notag 
	\\ \notag
	&= 
 \zeta^{-1}+\zeta + \left( -12 +6 \left(\zeta^{-1}+\zeta\right) \right) q
 \\ \notag
 &\qquad + \left( -92+57\left( \zeta^{-1}+ \zeta\right) -12 \left(\zeta^{-2}+\zeta ^{2}\right)+\zeta^{-3}+\zeta^{3}\right) {q}^{2}
 \\ \notag
 &\qquad + \left(-444 +308 \left(\zeta^{-1}+ \zeta\right) 
-92\left( \zeta^{-2}+\zeta^{2}\right) +6 \left(\zeta^{-3}+ \zeta^{3}\right) \right) {q}^{3}+\ldots .
\end{align}
In particular, we obtain the following result.

\begin{lemma}
The function $\mathcal{E}_{\VLp,h}$ is a weak Jacobi form of weight $0$ and index $1$ for $\Gamma_{0}(3)^J$.
\end{lemma}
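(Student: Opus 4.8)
The plan is to verify the three defining properties of a weak Jacobi form directly from the product expression
\[
\mathcal{E}_{\VLp,h}(z;\tau)=-\sum_{r\in\{1,3,5\}}\left(\eta(\tau)^{-1}\thab{\frac r6}{\half}(z;3\tau)\right)^{6},
\]
handling the elliptic (index) transformation and the modular transformation separately, and then recording the growth condition. Weight and index are quickest to read off factor‑by‑factor: each $\eta(\tau)^{-1}\thab{r/6}{1/2}(z;3\tau)$ is of weight $0$ and index $\tfrac16$ (this also matches \eqref{eq:thetaLh}, since $\theta^h_L$ has weight $\operatorname{rank}(L)/2=3$ and index $Q(h)=\half(h,h)=1$, and $\eta^6$ has weight $3$), so the sixth power has weight $0$ and index $1$. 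Holomorphy on $\C\times\HH$ is immediate because $\eta$ is non‑vanishing there. The crucial structural observation, which is also the source of the group $\Gamma_0(3)$, is that for $\gamma=\left(\begin{smallmatrix}a&b\\c&d\end{smallmatrix}\right)$ one has $3\gamma\tau=\gamma'(3\tau)$ with $\gamma'=\left(\begin{smallmatrix}a&3b\\c/3&d\end{smallmatrix}\right)$ of determinant $ad-bc=1$, and $\gamma'\in\SL_2(\Z)$ precisely when $3\mid c$, i.e.\ when $\gamma\in\Gamma_0(3)$; moreover $(c/3)(3\tau)+d=c\tau+d$.

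For the elliptic transformation I would apply the spectral flow \eqref{eq:thetaflow} to each factor. Writing $z+\lambda\tau+\mu=z+\tfrac{\lambda}{3}(3\tau)+\mu$ and using \eqref{eq:thetaflow} with modulus $3\tau$, each factor acquires the prefactor $e^{-2\pi\im\frac{\lambda}{3}(\half+\mu)}q^{-\lambda^2/6}\zeta^{-\lambda/3}$ while its first characteristic shifts $\tfrac r6\mapsto\tfrac{r+2\lambda}{6}$. Raising to the sixth power produces $q^{-\lambda^2}\zeta^{-2\lambda}$ together with a trivial phase, which is exactly the index‑$1$ automorphy factor $e^{-2\pi\im(\lambda^2\tau+2\lambda z)}$. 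Since $r+2\lambda$ is odd, reducing the shifted first characteristic modulo $1$ via \eqref{eq:aplus1} returns a value in $\{1,3,5\}/6$, so the summation over $r$ is merely permuted; tracking the residual phases from the characteristic reductions and checking they cancel across the orbit is routine.

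The modular transformation is the main step. Here I would combine the $\SL_2(\Z)$ transformation law for theta with characteristics (the general law underlying \eqref{eqn:ThetaTransforms}) applied to $\gamma'$ acting on the modulus $3\tau$ and the variable $z$, with the standard $\eta(\gamma\tau)=\varepsilon(\gamma)(c\tau+d)^{1/2}\eta(\tau)$, where $\varepsilon$ is the $\eta$‑multiplier. The automorphy factors assemble correctly: each theta contributes $\big((c/3)(3\tau)+d\big)^{1/2}=(c\tau+d)^{1/2}$, so six factors give $(c\tau+d)^3$, cancelling the $(c\tau+d)^{-3}$ from $\eta^{-6}$ to yield weight $0$; and each theta contributes the exponential $e^{\pi\im(c/3)z^2/(c\tau+d)}$, whose sixth power is $e^{2\pi\im c z^2/(c\tau+d)}$, exactly the index‑$1$ factor. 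What remains, and what I expect to be the real obstacle, is to show that the affine action of $\gamma'$ on the characteristic $\left(\tfrac r6,\half\right)$ sends the first entry to $\tfrac{dr+c(d-1)}{6}\bmod 1$ and thereby permutes the set $\{r/6:r\in\{1,3,5\}\}$ (absorbing integer shifts of the second entry into phases), so that the sum over $r$ is preserved. This is a parity‑and‑residue computation using $3\mid c$ and $ad-bc=1$ (one checks $dr+c(d-1)$ is always odd, and that $r\mapsto dr+c(d-1)$ is a bijection of $\{1,3,5\}$ mod $6$ since $\gcd(d,6)=1$ or $d$ is even with $c/3$ odd), and it simultaneously pins down the multiplier system arising from the eighth roots of unity in the theta transformation together with $\varepsilon(\gamma)^{-6}$.

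Finally, the growth condition required of a weak Jacobi form—that the expansion at every cusp has only non‑negative powers of $q$—follows from positive‑definiteness of $L$ together with the integrality afforded by $\cc=6m$ (so that $L_\sigma(0)-\cc/24$ has non‑negative integral eigenvalues in the Ramond sector). The constituent series $\thab{r/6}{1/2}(z;3\tau)$ have $q$‑expansions bounded below, and the $q^{-1/4}$ from $\eta^{-6}$ is exactly compensated at $\infty$, as confirmed by the $q$‑expansion displayed above with leading term $\zeta^{-1}+\zeta$ in degree $q^0$. The same boundedness propagates to the remaining (finitely many) cusps of $\Gamma_0(3)$ after applying the transformation law established in the previous paragraph, completing the verification that $\mathcal{E}_{\VLp,h}$ is a weight $0$, index $1$ weak Jacobi form for $\Gamma_0(3)^J$.
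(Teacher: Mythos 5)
Your route is genuinely different from the paper's: you verify the transformation law for an arbitrary $\gamma=\left(\begin{smallmatrix}a&b\\c&d\end{smallmatrix}\right)\in\Gamma_0(3)$ by rewriting $3(\gamma.\tau)=\gamma'(3\tau)$ with $\gamma'=\left(\begin{smallmatrix}a&3b\\c/3&d\end{smallmatrix}\right)\in\SL_2(\Z)$ and invoking the general transformation law for $\thab{a}{b}$ under $\SL_2(\Z)$, whereas the paper checks only the generators $T$ and $ST^{3}S$ of $\Gamma_0(3)$, pushing each factor through the two explicit transformations \eqref{eqn:ThetaTransforms} so that the exact identity $\mathcal{E}_{\VLp,h}(ST^{3}S.z;ST^{3}S.\tau)=e^{2\pi\im\frac{3z^{2}}{3\tau-1}}\mathcal{E}_{\VLp,h}(z;\tau)$ falls out with no residual constant, and disposes of $T$ via the integral $q$-expansion. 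Your bookkeeping of weight, index, the elliptic transformation, and the permutation of the characteristics $\{\tfrac16,\tfrac36,\tfrac56\}$ under $r\mapsto dr+c(d-1)$ is correct.

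The gap is the multiplier. The lemma asserts that $\mathcal{E}_{\VLp,h}$ is a weak Jacobi form in the sense of Section~\ref{Section:Background}, i.e.\ with \emph{trivial} multiplier, and this is precisely what your argument leaves open: the six theta factors each contribute a root of unity from the general characteristic transformation law, further phases arise from reducing the transformed characteristics modulo $\Z^{2}$, and $\eta^{-6}$ contributes $\varepsilon(\gamma)^{-6}$. What survives is a priori a finite-order character $v\colon\Gamma_0(3)\to\C^{\times}$ (a genuine homomorphism here, since the weight-$0$, index-$1$ automorphy factor is an exact cocycle), and $\Gamma_0(3)$ admits nontrivial such characters, so triviality is not automatic. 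Saying the computation ``pins down the multiplier system'' concedes the point rather than closing it; the cheapest repair from where you stand is to observe that $v$ is determined by its values on $T$ and $ST^{3}S$ and evaluate it there --- which collapses back to the paper's computation. A second, smaller soft spot: the growth condition at the cusp $0$ does not follow from positive-definiteness plus integrality, nor from the $\Gamma_0(3)$-transformation law (which only governs cusps equivalent to $\infty$); one must apply $S\notin\Gamma_0(3)$, after which the first characteristic becomes $\half$, each factor $\thab{\half}{-s/6}\left(\frac{z}{3};\frac{\tau}{3}\right)$ starts at $q^{1/24}$, and the sixth power exactly absorbs the $q^{-1/4}$ from $\eta^{-6}$. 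Without that half-integral characteristic the expansion at $0$ would begin at $q^{-1/4}$, so this cancellation does need to be exhibited.
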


\begin{proof}
The modular subgroup $\Gamma_{0}(3)$ is generated by $T,ST^{3}S$ so we need only check the  $ST^{3}S$ modular transformation property. For $\gamma=\left(\begin{smallmatrix} a&b \\ c& d \end{smallmatrix}\right)\in \SL_2(\Z)$ and $(z; \tau)\in \mathbb{C} \times \mathbb{H}$ set $\gamma.\tau := \tfrac{a\tau +b}{c\tau +d}$ and $\gamma.z=\tfrac{z}{c\tau+d}$. Using \eqref{eqn:ThetaTransforms} we find 
\begin{align*}
\mathcal{E}_{\VLp,h}\left(S.z; S.\tau\right)
=&
\tfrac{1}{27}e^{2\pi \im \tfrac{z^{2}}{\tau}} \sum_{s\in\{1,3,5\}}\left(
\eta (\tau)^{-1}
\thab{\half}{-\frac{s}{6}}\left(\frac{z}{3}; \frac{\tau}{3} \right)
\right)^{6}
\end{align*}
so that
\begin{align*}
\mathcal{E}_{\VLp,h}\left(T^3S.z; T^3S.\tau\right) =&
\tfrac{1}{27}e^{2\pi \im \tfrac{z^{2}}{(\tau+3)}} \sum_{s\in\{1,3,5\}}\left(
\eta (\tau)^{-1}
\thab{\half}{-\frac{s}{6}}\left(\frac{z}{3}; \frac{\tau}{3}\right)
\right)^{6}.
\end{align*}
This implies 
\[
\mathcal{E}_{\VLp,h}\left( S T^{3}S.z; S T^{3}S.\tau \right)=e^{2\pi \im \tfrac{3z^{2}}{(3\tau-1)}}\mathcal{E}_{\VLp,h}\left(z; \tau\right).
\]
The elliptic transformation is straight-forward.
\end{proof}
By Lemma \ref{LemmaIII} and \eqref{eqn:EZdecomp} we find $f_{1} ^{V_l,h}=g_0^{V_L,h}$ and $f_{0} ^{V_L,h}=-g_1^{V_L,h}$. These may be obtained from Proposition \ref{prop:EZcoeff} in terms of $R^L$ and $\chi^L$. We have $R^{L}(\tau):=\mathcal{E}_{\VLp,h}\left(\half ;\tau \right)$ is given by
	\begin{align*}
	R^{L}(\tau)=& 
-\sum_{s\in\{1,3,5\}} \left(\frac{1}{\eta(\tau)}\thab{\frac{s}{6}}{0}(0; 3\tau )\right)^{6}
	\\
	=& -\left(2+24 q+232 {q}^{2}+1256 {q}^{3}+5448 {q}^{4}+20432 {q}^{5}+\ldots \right) ,
	\end{align*} 
 and is $\Gamma_{0}(12)$ invariant by Proposition~\ref{prop:EZcoeff}. Note that Remark \ref{Remark:Ramond} implies $-R^L(\tau)=\operatorname{tr}_{V_{\sigma,h}} (q^{L_\sigma(0)-\cc/24})$ is the Ramond character with non-negative coefficients as expected.
\begin{lemma}
We have
$\chi^L=\mathcal{E}_{\VLp,h}(0; \tau)=2$.
\end{lemma}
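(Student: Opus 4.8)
The plan is to exploit the fact, just established, that $\mathcal{E}_{\VLp,h}$ is a weak Jacobi form of weight $0$ and index $1$. As noted at the start of Section~\ref{Section:ChiExp}, the $z=0$ specialization of such a form is a holomorphic modular form of weight $0$, hence a constant; so $\chi^L:=\mathcal{E}_{\VLp,h}(0;\tau)$ is indeed independent of $\tau$, and it suffices to pin down its numerical value. Since the value is constant, I would extract it from the $q\to 0$ limit of the closed-form expression
\[
\mathcal{E}_{\VLp,h}(z;\tau)=-\sum_{r\in\{1,3,5\}}\left(\eta(\tau)^{-1}\thab{\frac{r}{6}}{\half}(z;3\tau)\right)^6
\]
evaluated at $z=0$.

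First I would dispose of the $r=3$ term: since $\thab{\half}{\half}=-\theta_1$ and $\theta_1(z;\tau)$ is odd in $z$, we have $\thab{\half}{\half}(0;3\tau)=-\theta_1(0;3\tau)=0$, so this summand drops out. Next I would show that the $r=1$ and $r=5$ summands coincide. Using \eqref{eq:aplus1} to write $\thab{\frac{5}{6}}{\half}=\thab{-\frac{1}{6}}{\half}$, then \eqref{eq:minus} at $z=0$ together with the quasiperiodicity $\thab{a}{b+1}=e^{2\pi \im a}\thab{a}{b}$ in the second index, one finds $\thab{\frac{5}{6}}{\half}(0;3\tau)=e^{-\pi \im/3}\thab{\frac{1}{6}}{\half}(0;3\tau)$, whose sixth power equals that of $\thab{\frac{1}{6}}{\half}(0;3\tau)$ because $e^{-2\pi \im}=1$. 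Hence
\[
\chi^L=-2\left(\eta(\tau)^{-1}\thab{\tfrac16}{\half}(0;3\tau)\right)^6 .
\]

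Finally, because $\chi^L$ is constant, I would evaluate this by its leading $q$-behavior. The defining series gives $\thab{\frac{1}{6}}{\half}(0;3\tau)=e^{\pi \im/6}q^{1/24}\left(1+O(q)\right)$, while $\eta(\tau)=q^{1/24}\left(1+O(q)\right)$, so the ratio tends to $e^{\pi \im/6}$ and its sixth power to $e^{\pi \im}=-1$ as $q\to 0$. Therefore $\chi^L=-2\cdot(-1)=2$. The main bookkeeping hurdle is tracking the theta characteristics and the accompanying roots of unity in the $r=1,5$ comparison, and confirming the phase $e^{\pi \im/6}$ in the leading coefficient; the constancy of $\mathcal{E}_{\VLp,h}(0;\tau)$ is what guarantees that this single leading term determines the answer, so no further terms need be computed. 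As a consistency check, setting $\zeta=1$ in the displayed $q$-expansion of $\mathcal{E}_{\VLp,h}$ annihilates every positive power of $q$ and leaves $\zeta^{-1}+\zeta=2$.
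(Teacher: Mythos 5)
Your argument is correct, and it reaches the value $2$ by a slightly different mechanism than the paper. Both proofs kill the $r=3$ term via $\textthab{\half}{\half}(0;\tau)=0$ and both identify the $r=1$ and $r=5$ summands using \eqref{eq:aplus1}, \eqref{eq:minus}, and the $b\mapsto b+1$ quasiperiodicity (your phase $e^{-\pi\im/3}$ matches the paper's). The difference is in how the surviving quantity is evaluated: the paper invokes Euler's pentagonal number theorem to prove the \emph{exact} identity
\begin{equation*}
\left(\thab{\tfrac16}{\half}(0;3\tau)\right)^{6}=-\,\eta(\tau)^{6},
\end{equation*}
so that $\mathcal{E}_{\VLp,h}(0;\tau)=-2\cdot(-1)=2$ identically, with constancy emerging as a byproduct; you instead import constancy from the preceding lemma (a weight $0$, index $1$ weak Jacobi form has constant $z=0$ specialization, as the paper notes at the start of Section~\ref{Section:ChiExp}) and then only need the leading coefficient $e^{\pi\im/6}q^{1/24}$ of $\textthab{1/6}{1/2}(0;3\tau)$ against $\eta(\tau)=q^{1/24}(1+O(q))$. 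Your route is marginally more economical (no pentagonal number theorem) but leans on the modularity lemma proved just before; the paper's route yields the stronger eta-quotient identity, which is of independent use. Both computations of the phases check out, and your closing consistency check against the displayed $q$-expansion is sound.
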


\begin{proof}
Euler's pentagonal number theorem states that 
\begin{align*}
\eta(\tau)&=q^{\frac{1}{24}}\sum_{n\in\Z} (-1)^n q^{\frac{1}{2}n(3n+1)}
=e^{-\frac{ \im\pi}{6}}
\thab{\frac{1}{6}}{\half}(3{\tau} ).
\end{align*}
Using \eqref{eq:minus} we also have
\begin{align}
\thab{-\frac{1}{6}}{\half}(3\tau )
&=
\thab{\frac{1}{6}}{-\half}(3\tau )
=
e^{-\frac{i \pi}{3}}
\thab{\frac{1}{6}}{\half}(3\tau )=e^{-\frac{i \pi}{6}}\eta(\tau).
\notag 
\end{align}
Together with \eqref{eq:aplus1} this gives
\begin{align*}
\left(\thab{\frac{1}{6}}{\half}(3{\tau})\right)^6=\left(\thab{\frac{5}{6}}{\half}(3{\tau})\right)^6=-\eta(\tau)^6.
\end{align*}
Meanwhile, $\textthab{\half}{\half}( 0; \tau)=0$ implies $\mathcal{E}_{\VLp}(0; \tau)=2$.
\end{proof}
Proposition~\ref{prop:EZcoeff} implies we may find $f_{0} ^L$, $f_{1} ^L$ from \eqref{eq:hr} in terms of $R^{L}$ and $\chi^{L}$ to obtain a vector-valued modular form for $\Gamma_0(3)$ of weight $-\half$ with multiplier matrix $\rho_{\Theta}^{-1}$.
Finally, using Proposition \ref{prop:Fhr} the mock modular form \eqref{eqn:Fdefn} discussed in the introduction, $F^L = F^{\mathcal{E}_{\VLp,h}}$, can easily be obtained.
\begin{lemma}
We have
	\begin{align*}
	F^{L}(\tau)&= f_{0}^L(\tau)S_{1}(\tau)-f_{1}^L(\tau)S_{0}(\tau)
	\\&=q^{-\frac{1}{8}}\left(1+5 q+29 {q}^{2}+80 {q}^{3}+253 {q}^{4}+654 {q}^{5}+\ldots \right).
	\end{align*}
	\hfill $\Box$
\end{lemma}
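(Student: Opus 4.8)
The first equality is nothing more than Proposition~\ref{prop:Fhr} applied to $\phi = \mathcal{E}_{\VLp,h}$. By the preceding lemma this function is a weight zero, index one weak Jacobi form for $\Gamma_{0}(3)^J$, so it admits a theta decomposition \eqref{eqn:EZdecomp} with components $f_0^L, f_1^L$, and Proposition~\ref{prop:Fhr} identifies its coefficient function as $F^L = F^{\mathcal{E}_{\VLp,h}} = f_0^L S_1 - f_1^L S_0$, where $S_0, S_1$ are the universal functions of Section~\ref{Section:S1S2} (independent of $\phi$).

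For the explicit $q$-series I would assemble the three ingredients already in hand and convolve them. The components $f_0^L, f_1^L$ are supplied by Proposition~\ref{prop:EZcoeff}: inserting $\chi^L = 2$ and the $\Gamma_{0}(12)$-invariant $R^L(\tau)$ computed above into \eqref{eq:hr} gives the closed forms $f_0^L = (\chi^L + R^L)/(2\theta_3(2\tau))$ and $f_1^L = (\chi^L - R^L)/(2\theta_2(2\tau))$; equivalently one may use $f_1^L = g_0^{\VLp,h}$ and $f_0^L = -g_1^{\VLp,h}$ (from Lemma~\ref{LemmaIII} and \eqref{eqn:EZdecomp}) together with the expansions of $g_0^{\VLp,h}, g_1^{\VLp,h}$ listed earlier in this section. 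The series for $S_0, S_1$ are read off from their $q$-expansions displayed in Section~\ref{Section:S1S2}. Multiplying out $f_0^L S_1 - f_1^L S_0$ and collecting coefficients order by order then produces the stated series.

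The genuine content has already been established, so no real obstacle remains: what is left is a finite convolution of known $q$-series, carried to as many terms as desired. The only care needed is bookkeeping --- expanding the quotients by $\theta_3(2\tau)$ and $\theta_2(2\tau)$ in \eqref{eq:hr}, and tracking the half-integral $q$-powers and the fourth-root-of-unity multipliers of $f_0^L, f_1^L$ against those of $S_0, S_1$, so that the product collapses to $q^{-1/8}$ times an integral power series. That collapse, together with Proposition~\ref{prop:hatFhr}, is precisely what exhibits $F^L$ as a weight one-half mock modular form with completion $\widehat{F}^L(\tau) = F^L(\tau) + 2C(\tau)$.
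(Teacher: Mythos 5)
Your proposal matches the paper's own (essentially omitted) argument: the first equality is just Proposition~\ref{prop:Fhr} applied to $\phi=\mathcal{E}_{\VLp,h}$, which the preceding lemma establishes as a weight $0$, index $1$ weak Jacobi form for $\Gamma_0(3)^J$, and the $q$-expansion is then a finite convolution of the already-displayed series for $f_0^L=-g_1^{\VLp,h}$, $f_1^L=g_0^{\VLp,h}$, $S_0$, and $S_1$. This is exactly the route the paper intends, so the proposal is correct and essentially identical in approach.
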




\begin{thebibliography}{BFOR17}
\bibitem[BFOR17]{BFOR-Harmonic}
Kathrin Bringmann, Amanda Folsom, Ken Ono, and Larry Rolen.
\newblock {\em Harmonic {M}aass forms and mock modular forms: theory and
  applications}, volume~64 of {\em American Mathematical Society Colloquium
  Publications}.
\newblock American Mathematical Society, Providence, RI, 2017.

\bibitem[Bor92]{Borcherds-Monstrous}
Richard~E. Borcherds.
\newblock Monstrous moonshine and monstrous {L}ie superalgebras.
\newblock {\em Invent. Math.}, 109(2):405--444, 1992.

\bibitem[CD19]{ChengDuncan-Meromorphic}
Miranda C.~N. Cheng and John F.~R. Duncan.
\newblock Meromorphic {J}acobi forms of half-integral index and umbral
  moonshine modules.
\newblock {\em Comm. Math. Phys.}, 370(3):759--780, 2019.

\bibitem[CDH14a]{CDH-Umbral}
Miranda C.~N. Cheng, John F.~R. Duncan, and Jeffrey~A. Harvey.
\newblock Umbral moonshine.
\newblock {\em Commun. Number Theory Phys.}, 8(2):101--242, 2014.

\bibitem[CDH14b]{CDH-UmbralNiemeier}
Miranda C.~N. Cheng, John F.~R. Duncan, and Jeffrey~A. Harvey.
\newblock Umbral moonshine and the {N}iemeier lattices.
\newblock {\em Res. Math. Sci.}, 1:Art. 3, 81, 2014.

\bibitem[CG13]{CG-JacobiThetaSeries}
F.~Cl\'{e}ry and V.~Gritsenko.
\newblock Modular forms of orthogonal type and {J}acobi theta-series.
\newblock {\em Abh. Math. Semin. Univ. Hambg.}, 83(2):187--217, 2013.

\bibitem[DLM05]{DLiuMa}
Chongying Dong, Kefeng Liu, and Xiaonan Ma.
\newblock Elliptic genus and vertex operator algebras.
\newblock {\em Pure Appl. Math. Q.}, 1(4, Special Issue: In memory of Armand
  Borel. Part 3):791--815, 2005.

\bibitem[DGO15]{DGO-UmbralProof}
John F.~R. Duncan, Michael~J. Griffin, and Ken Ono.
\newblock Proof of the umbral moonshine conjecture.
\newblock {\em Res. Math. Sci.}, 2:Art. 26, 47, 2015.

\bibitem[DH17]{DuncanHarvey-UniqueNiemeier}
John F.~R. Duncan and Jeffrey~A. Harvey.
\newblock The umbral moonshine module for the unique unimodular {N}iemeier root
  system.
\newblock {\em Algebra Number Theory}, 11(3):505--535, 2017.

\bibitem[DO18]{DuncanODesky-SuperVAs}
John F.~R. Duncan and Andrew O'Desky.
\newblock Super vertex algebras, meromorphic {J}acobi forms and umbral
  moonshine.
\newblock {\em J. Algebra}, 515:389--407, 2018.

\bibitem[EH09]{EguchiHikami-Mock}
Tohru Eguchi and Kazuhiro Hikami.
\newblock Superconformal algebras and mock theta functions.
\newblock {\em J. Phys. A}, 42(30):304010, 23, 2009.

\bibitem[EOT11]{EOT-Mathieu}
Tohru Eguchi, Hirosi Ooguri, and Yuji Tachikawa.
\newblock Notes on the {$K3$} surface and the {M}athieu group {$M_{24}$}.
\newblock {\em Exp. Math.}, 20(1):91--96, 2011.

\bibitem[ET88]{ET-N4characters}
Tohru Eguchi and Anne Taormina.
\newblock Character formulas for the {$N=4$} superconformal algebra.
\newblock {\em Phys. Lett. B}, 200(3):315--322, 1988.

\bibitem[EZ85]{EZ-JacobiForms}
Martin Eichler and Don Zagier.
\newblock {\em The theory of {J}acobi forms}, volume~55 of {\em Progress in
  Mathematics}.
\newblock Birkh\"{a}user Boston, Inc., Boston, MA, 1985.

\bibitem[FK01]{FK-ThetaConstants}
Hershel~M. Farkas and Irwin Kra.
\newblock {\em Theta constants, {R}iemann surfaces and the modular group},
  volume~37 of {\em Graduate Studies in Mathematics}.
\newblock American Mathematical Society, Providence, RI, 2001.
\newblock An introduction with applications to uniformization theorems,
  partition identities and combinatorial number theory.

\bibitem[FLM88]{FLM}
Igor Frenkel, James Lepowsky, and Arne Meurman.
\newblock {\em Vertex operator algebras and the {M}onster}, volume 134 of {\em
  Pure and Applied Mathematics}.
\newblock Academic Press, Inc., Boston, MA, 1988.

\bibitem[Gan16]{Gannon-Mathieu}
Terry Gannon.
\newblock Much ado about {M}athieu.
\newblock {\em Adv. Math.}, 301:322--358, 2016.

\bibitem[HM18]{HoehnMason-Most}
Gerald H\"{o}hn and Geoffrey Mason
\newblock {\em Most vertex superalgebras associated to an odd unimodular lattice of rank $24$ have an $N=4$ superconformal structure}.
\newblock To appear in Experimental Mathematics, 2021.
\newblock arXiv:1810.00254.

\bibitem[Kac98]{Kac-Beginners}
Victor Kac.
\newblock {\em Vertex algebras for beginners}, volume~10 of {\em University
  Lecture Series}.
\newblock American Mathematical Society, Providence, RI, second edition, 1998.

\bibitem[KM12]{KrauelMason-Weak}
Matthew Krauel and Geoffrey Mason.
\newblock Vertex operator algebras and weak {J}acobi forms.
\newblock {\em Internat. J. Math.}, 23(6):1250024, 10, 2012.

\bibitem[KM15]{KrauelMason-1pt}
Matthew Krauel and Geoffrey Mason.
\newblock Jacobi trace functions in the theory of vertex operator algebras.
\newblock {\em Commun. Number Theory Phys.}, 9(2):273--306, 2015.

\bibitem[Li96]{Li-Local}
Hai-Sheng Li.
\newblock Local systems of twisted vertex operators, vertex operator
  superalgebras and twisted modules.
\newblock In {\em Moonshine, the {M}onster, and related topics ({S}outh
  {H}adley, {MA}, 1994)}, volume 193 of {\em Contemp. Math.}, pages 203--236.
  Amer. Math. Soc., Providence, RI, 1996.

\bibitem[MTY18]{MTY-N=4}
Geoffrey Mason, Michael Tuite, and Gaywalee Yamskulna.
\newblock {$N=2$} and {$N=4$} subalgebras of super vertex operator algebras.
\newblock {\em J. Phys. A}, 51(6):064001, 22, 2018.


\bibitem[Smi85]{AFS-Head}
Stephen~D. Smith.
\newblock On the head characters of the {M}onster simple group.
\newblock In {\em Finite groups---coming of age ({M}ontreal, {Q}ue., 1982)},
  volume~45 of {\em Contemp. Math.}, pages 303--313. Amer. Math. Soc.,
  Providence, RI, 1985.

\bibitem[Zwe02]{Zwegers-Thesis}
Sander Zwegers.
\newblock {\em Mock Theta Functions}.
\newblock PhD dissertation, Utrecht University, 2002.
\newblock arXiv:0807.4834.



\end{thebibliography}

\end{document}